\documentclass{article}
\usepackage{amsthm,amsmath,amssymb}
\usepackage{color}

\RequirePackage[colorlinks,citecolor=blue,urlcolor=blue]{hyperref}

% use this package if hyperref and natbib is used:
%\RequirePackage{hypernat}
%\usepackage[dvips,bookmarks,colorlinks=true,linkcolor=blue,urlcolor=red]{hyperref}
% will be filled by editor:
%\doi{10.1214/154957804100000000}
%\pubyear{0000}
%\volume{0}
%\firstpage{0}
%\lastpage{0}
%\arxiv{}

% put your definitions there:
%\startlocaldefs

\def\bE{\mathbb E}

\newtheorem{theorem}{Theorem}
\newtheorem{lemma}{Lemma}
\newtheorem{Corollary}{Corollary}

\newtheorem{Assumption}{Assumption}

%
%\newtheorem{Theorem}{Theorem}
%\newtheorem{theorem}{Theorem}
%\newtheorem{lemma}[Theorem]{Lemma}
%\newtheorem{Corollary}[Theorem]{Corollary}
%\newtheorem{proposition}[Theorem]{Proposition}
%\newtheorem{Assumption}[Theorem]{Assumption}
%\newtheorem{example}{Example}

%\endlocaldefs
%\arxiv{math.PR/0000000}
\begin{document}
%\begin{frontmatter}

\title{Estimation of matrices with row sparsity}
\author{O. Klopp, A. B. Tsybakov}

\maketitle

\begin{abstract}
An increasing number of applications is concerned with recovering a
sparse matrix from noisy observations.   In this paper, we consider the setting
where each row of the unknown matrix is sparse. We establish minimax  optimal rates of convergence for estimating  matrices with row sparsity. A major focus in the present paper is on the derivation of lower bounds.

\end{abstract}

\section{Introduction}
In recent years, there has been a great interest for the theory of estimation in high-dimensional statistical models under different sparsity scenarii. The main motivation behind sparse estimation is based on the observation that, in several practical applications, the number of variables is much larger than the number of observations, but the degree of freedom of the underlying model is relatively small. One  example of such sparse estimation is the problem of estimating of a sparse regression vector from a set of linear measurements (see, e.g., \cite{bickel_ritov_tsybakov}, \cite{bunea_tsybakov_2}, \cite{lounici_lasso}, \cite{van-geer}). Another example is the problem of  matrix recovery under the assumption that the unknown matrix has low rank (see, e.g., \cite{candes-recht-exact, rohde_tsybakov,Koltchinskii-Tsybakov,klopp_general}).

   In some recent papers dealing with covariance matrix estimation, a different notion of sparsity   was considered (see, for example, \cite{cai_zhou}, \cite{rigollet_tsybakov_sinica}). This notion is based on  sparsity assumptions on the rows (or columns) $M_{i\cdot}$  of matrix $M$. One can consider the hard sparsity assumption  meaning that each row $M_{i\cdot}$  of $M$  contains at most $s$ non-zero elements, or soft sparsity assumption, based on imposing a certain decay rate on ordered entries of $M_{i\cdot}$. These notions of sparsity can be defined in terms of $l_q-$balls for $q\in [0,2)$, defined as
  \begin{equation}
  \mathbb B_{q}(s)=\left \{v=(v_i)\in \mathbb R^{n_2}\,:\,\sum^{n_2}_{i=1}\vert v_i\vert^{q}\leq s\right \}
  \end{equation}
where $s<\infty$ is a given constant. The case $q=0$
  \begin{equation}
  \mathbb B_{0}(s)=\left \{v=(v_i)\in \mathbb R^{n_2}\,:\,\sum^{n_2}_{i=1}\mathbb I(v_i\not=0)\leq s\right \}
  \end{equation}
  corresponds to the set of vectors $v$ with at most $s$ non-zero elements. Here $\mathbb{I}(\cdot)$ denotes the indicator function and $s\ge 1$ is an integer.

  In the present note, we consider this row sparsity setting in the matrix signal plus noise model.
Suppose we have noisy observations $Y=(y_{ij})$ of an $n_1\times n_2$ matrix $M=(m_{ij})$ where
 \begin{equation}\label{model}
 %\begin{split}
 y_{ij}=m_{ij}+\xi_{ij}, \quad i=1,\dots,n_1, \ \ j=1,\dots, n_2,
 %\end{split}
\end{equation}
here, $\xi_{ij}$  are i.i.d Gaussian $\mathcal{N}(0,\sigma^{2})$, $\sigma^{2}>0$, or sub-Gaussian random variables.  We denote by $E=(\xi_{ij})$ the corresponding matrix of noise.
We study the  minimax optimal rates of convergence for the estimation of $M$ assuming that  there exist $q\in [0,2)$ and $s$ such that $M_{i\cdot}\in \mathbb B_{q}(s)$ for any $i=1,\dots,n_1$.

 The minimax rate of convergence  characterizes the fundamental limitation of the estimation accuracy. It also captures the interdependence between the different parameters in the model. There is an rich line of work on such fundamental limits (see, for example, \cite{ibragimov_hasm,tsybakov_book,Johnstone}). The minimax risk depends crucially on the choice of the norm in the loss function. In the present paper, we measure the estimation error in  $ \| \cdot\|_{2,p}$-(quasi)norm   for $0<p<\infty$ (for the definition see \eqref{norm_p}).

 For  $n_1=1$, we obtain the problem of estimating of a vector   belonging to a $\mathbb B_{q}(s)$ ball in $\mathbb{R}^{n_2}$. This problem was considered in a number of papers, see, for example, \cite{donoho_johnstone}, \cite{birge_massart}, \cite{abramovich_donoho}, \cite{rigollet_tsybakov_annals}. Let $\eta_{vect}$ denote the minimax rate of convergence with respect to the squared Euclidean norm in the vector case.  It is interesting to note that  the results of the present paper  show that, for the case $p=2$, the minimax rate of convergence for estimation of matrices under the row sparsity assumption is $n_1\eta_{vect}$. Thus, in this case, the problem reduces to estimation of each row separately. The additional matrix structure does not lead to improvement or deterioration of the rate of convergence. We show that it is also true for general $p$.

% More precisely, for an integer $s\in \{1,\dots,n_2\}$ we denote by $\mathcal{A}(s)$  the set of matrices $M\in \mathbb{R}^{n_1\times n_2}$ such that $\vert J(M_{i\cdot})\vert\leq s$ for $i=1,\dots,n_1$. The minimax upper and low bounds in Sections \ref{low_bound} and \ref{upper bounds} imply that
%  \begin{equation}\label{risk}
%    \begin{split}
%    \underset{\hat M}{\inf}\underset{M\in \mathcal{A}(s)}{\sup} \bE\|\hat M-M\|^{2}_{2}\asymp \sigma^2 s n_1\,\log\left (\dfrac{e\,n_2}{s}\right ).
%    \end{split}
%    \end{equation}
%  This is the same rate of convergence as given by \eqref{risk_hard_sparcity}.

 A major focus in  the present paper is on derivation of lower bounds, which is a key step in establishing minimax optimal rates of convergence. Our analysis is based on a new selection lemma (Lemma \ref{selection_lemma}).
 The rest of the paper is organized  as follows. In  Section \ref{notations}, we introduce the notation and some basic tools used throughout the paper. Section \ref{low_bound} establishes the minimax lower bounds for estimation of matrices with row sparsity in  $ \| \cdot\|_{2,p}$-norm, see Theorems  \ref{thm_lower_bound} and \ref{thm_lower_bound_q}.  In Section \ref{minimax}, we derive the upper bounds on the risks using a reduction  to the vector case.
 Most of the proofs are given in the appendix.

% % % % % % % % % % % % % % % % % % % % % % % % % % % % % % % % % % % % % % % % % % % % % %
\subsection{Definitions and notation}\label{notations}
%Here we collect notations, some basics tools and definitions used throughout of the paper.

 Let $A$ be a matrix or a vector. For $0< q< \infty$ and $A\in \mathbb R^{n_1\times n_2}=(a_{ij})$, we denote by $ \| A\|_q=\left (\sum_{i,j}  \vert a_{ij}\vert^q\right )^{1/q}$ the elementwise $l_q$-(quasi-)norm of $A$,  and by $ \left\Vert A\right\Vert_0$  the number of non-zero coefficients of $A$: 
 $$\left\Vert A\right\Vert_0=\sum_{i,j} \mathbb{I}(a_{ij}\not = 0)$$
 where $\mathbb{I}(\cdot)$ denotes the indicator function.
 % and for a matrix $A\in \mathbb R^{n_1\times n_2}$,  $ \| A\|_1$ is the $l_1$-norm of its coefficients.
% We will measure the estimation error in $ \| \cdot\|_{2,p}$-(quasi-)norm.
 For any $A=(A_{1\cdot},\dots, A_{n_1\cdot})^{T}\in \mathbb R^{n_1\times n_2}$ and $p> 0$ define
 \begin{equation}\label{norm_p}
 \begin{split}
  \| A\|_{2,p}=\left (\sum\limits_{i=1}^{n_1}  \| A_{i\cdot}\|^{p}_2\right )^{1/p}.
 \end{split}
 \end{equation}
% and
% $$
% \| A\|_{0,\infty}=\max_{i=1,\dots,n_1}  \| A_{i\cdot}\|_0 = \max_{i=1,\dots,n_1} \sum_{j=1}^{n_2} \mathbb I_{\{a_{ij}\ne 0\}}
% $$
%where $\mathbb I_{\{\cdot\}}$ denotes the indicator function. Thus,
%$$
%\mathcal{A}(s)=\{M \in \mathbb R^{n_1\times n_2} : \,  \| M\|_{0,\infty} \le s \}.
%$$
For $p=2$, $ \| A\|_{2,2}$ is the elementwise $l_2$-norm of $A$ and we will use the notation $\| \cdot\|_{2,2}=\| \cdot\|_{2}$. For $0<p<1$, we have the following inequality
$$\| A+A'\|^{p}_{2,p}\leq \| A\|^{p}_{2,p}+\| A'\|^{p}_{2,p}.$$
%which implies that $\| \cdot\|^{p}_{2,p}$ is a semi-distance.
For $q\in[0,2)$ and $s> 0$ we define the following class of matrices
\begin{equation}\label{class_sparsity}
\mathcal{A}(q,s)=\{A \in \mathbb R^{n_1\times n_2} : \,  A_{i\cdot}\in \mathbb B_q(s)\;\text{for any}\; i=1,\dots,n_1\}.
\end{equation}
In the limiting case $q=0$,  we will also write
\begin{equation}\label{class_sparsity_hard}
\mathcal{A}(s)=\{A \in \mathbb R^{n_1\times n_2} : \,  A_{i\cdot}\in \mathbb B_0(s)\;\text{for any}\; i=1,\dots,n_1\}.
\end{equation}
We set $\mathbb N_{n_1\times n_2}=\left \{(i,j)\,:\,1\leq i\leq n_1,\, 1\leq j\leq n_2\right \}$. For two real numbers $a$ and $b$ we use the notation
$a \wedge b := \min(a, b)$, $a \vee b := \max(a, b)$;
we denote by $\lfloor x\rfloor$  the integer part of $x$; we use the symbol $C$ for a generic positive constant, which is independent of $n_1,n_2,s$  and $\sigma$ and may take different values at different appearances.

% For a matrix $A\in \mathbb{R}^{n_1\times n_2}$ we set $\mathbf{J(A)}=\{(i,j)\in \mathbb N_{n_1\times n_2}\,:\, a_{ij}\not = 0\}$ and for a vector $\omega\in \mathbb{R}^{n_2}$ we set $\mathbf{J(\omega)}=\{1\leq j\leq n_2\,:\, a_{j}\not = 0\}$. %For any $J\subset \mathbb N_{n_1\times n_2}$, $ \| J \|$ denotes its cardinality.
%For any $1\leq i\leq m_1$, let $S_i$ denotes the set of indices of the non-zero elements of the $i-$th row of $A_0$.

 % % % % % % % % % % % % % % % % % % % % % % % % % % % % % % % % % % % % % % % % %

 \section{Lower bounds}\label{low_bound}
We start by establishing the minimax lower bounds for estimation of matrices over the classes ${\cal A}(s)$ (Theorem \ref{thm_lower_bound}) and ${\cal A}(q,s)$ (Theorem \ref{thm_lower_bound_q}). We  denote by $\underset{\hat A}{\inf}$ the infimum over all estimators $\hat A$ with values in $\mathbb{R}^{n_1\times n_2}$. Consider first the case $q=0$. %The following minimax lower bounds holds.
 %Recall that   for an integer $s\in \{1,\dots,n_2\}$ we denote by $\mathcal{A}(s)$  the set of matrices $A=(A_1\,\dots,A_{n_1})^{T}\in \mathbb{R}^{n_1\times n_2}$ such that for any $i\in \{1,\dots,n_{n_1}\}$, $\vert J(A_i)\vert\leq s$.
 \begin{theorem}\label{thm_lower_bound}
Let $n_1,n_2\geq 2$ and $p> 0$. Fix an integer $1\leq s\leq n_2/2$ . Assume that for $(i,j)\in \mathbb N_{n_1\times n_2}$%$i=1,\dots,n_1$,
 %$j=1,\dots,n_2$
  the noise variables $\xi_{ij}$ are i.i.d Gaussian $\mathcal{N}(0,\sigma^{2})$, $\sigma^{2}>0$. Then,
 \begin{itemize}
 \item[(i)]  \begin{equation*}
  \begin{split}
  \underset{\hat A}{\inf}\underset{A\in \mathcal{A}(s)}{\sup}\mathbb P \left \{ \| \hat A-A\|^{2}_{2,p}\geq C\,\sigma^{2}\,(n_1)^{2/p}\,s\,\log\left (\dfrac{e\,n_2}{s}\right )\right \}\geq \beta;
  \end{split}
  \end{equation*}

 \item[(ii)] $$\underset{\hat A}{\inf}\underset{A\in \mathcal{A}(s)}{\sup}\bE \| \hat A-A\|^{2}_{2,p}\geq  \tilde C\,\sigma^{2}\,(n_1)^{2/p}\,s\,\log\left (\dfrac{e\,n_2}{s}\right ).$$
  \end{itemize}
  where $0<\beta<1$, $C>0$, and $\tilde C>0$ are absolute constants.
 \end{theorem}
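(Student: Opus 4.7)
The overall strategy is a standard Fano-type lower bound argument, with the novelty concentrated in the construction of the hypothesis family. I will build a finite set $\{A^{(1)},\dots,A^{(N)}\}\subset\mathcal{A}(s)$ of test matrices whose pairwise $\|\cdot\|_{2,p}$-distances dominate the claimed rate, while the pairwise Kullback-Leibler divergences under the Gaussian observation model remain small enough for Fano's inequality to yield a constant lower bound on the testing error.

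The building block is the Varshamov-Gilbert bound applied at the row level: there exists $\mathcal{V}\subset\{0,1\}^{n_2}$ consisting of vectors with exactly $s$ nonzero coordinates such that $\log|\mathcal{V}|\geq c\,s\log(e\,n_2/s)$ and any two distinct $v,v'\in\mathcal{V}$ satisfy $\|v-v'\|_0\geq c's$. Fix a scaling $\tau>0$ to be tuned. The raw hypothesis family consists of all matrices whose $i$-th row is $\tau\cdot v^{(i)}$ for some $v^{(i)}\in\mathcal{V}$, giving $|\mathcal{V}|^{n_1}$ matrices in $\mathcal{A}(s)$. However, not every pair is separated in the $\|\cdot\|_{2,p}$-norm by the desired amount: the row-wise distances must be aggregated through the nonlinear $p$-th power in \eqref{norm_p}, and if two matrices agree on most rows the separation collapses. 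The selection lemma (Lemma \ref{selection_lemma}) is invoked precisely to extract a subfamily $\mathcal{M}$ in which any two distinct matrices disagree on at least a constant fraction of rows.

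Granted the selection lemma, the remaining estimates are routine. For any two distinct $A,A'\in\mathcal{M}$, let $R\subset\{1,\dots,n_1\}$ be the set of rows in which they disagree; then $|R|\geq c''n_1$ and for each $i\in R$ one has $\|A_{i\cdot}-A'_{i\cdot}\|_2^2\geq c's\tau^2$, whence
\begin{equation*}
\|A-A'\|_{2,p}^p \,\geq\, |R|\,(c's\tau^2)^{p/2} \,\geq\, c''\,n_1\,(c's\tau^2)^{p/2},
\end{equation*}
so that $\|A-A'\|_{2,p}^2\geq C_1\,n_1^{2/p}\,s\,\tau^2$. On the other hand, the KL divergence between the product Gaussian laws of $Y$ under $A$ and under $A'$ equals $\|A-A'\|_2^2/(2\sigma^2)\leq C_2\,n_1 s\tau^2/\sigma^2$, since each row involves at most $2s$ active coordinates of squared magnitude $\tau^2$. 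Calibrating $\tau^2=c_0\,\sigma^2\log(e\,n_2/s)$ makes the maximum pairwise KL comparable to $n_1 s\log(e\,n_2/s)$, which is a small constant fraction of $\log|\mathcal{M}|$ as long as the selection lemma preserves cardinality $\log|\mathcal{M}|\gtrsim n_1 s\log(e\,n_2/s)$. Fano's inequality then delivers part (i); part (ii) follows from (i) by the elementary inequality $\bE X\geq a\,\bP(X\geq a)$ applied to $X=\|\hat A-A\|_{2,p}^2$.

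The main obstacle is Lemma \ref{selection_lemma} itself. One must exhibit a subset of $\mathcal{V}^{n_1}$ in which any two distinct elements disagree on at least a constant fraction of coordinates (viewed as $n_1$-tuples over the alphabet $\mathcal{V}$), while retaining cardinality exponential in $n_1 s\log(e\,n_2/s)$. The natural route is the probabilistic method: for two independently uniform matrices from $\mathcal{V}^{n_1}$ the expected number of row agreements is $n_1/|\mathcal{V}|$, which is small when $|\mathcal{V}|$ is large; a concentration estimate combined with greedy pruning should then yield the lemma. Balancing the constants so that simultaneously the KL remains controlled, the $\|\cdot\|_{2,p}$-separation attains the target $n_1^{2/p}s\log(e\,n_2/s)$, and the pruned family remains large enough for Fano is the delicate technical point on which the proof hinges.
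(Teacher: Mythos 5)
Your overall architecture (a Fano-type bound via Theorem 2.5 of \cite{tsybakov_book}, a packing set of scaled $s$-sparse binary matrices, the Gaussian KL identity $\mathrm{KL}=\|B-B'\|_2^2/(2\sigma^2)$, calibration $\tau^2\asymp\sigma^2\log(e n_2/s)$, and Markov for part (ii)) coincides with the paper's, but your construction of the packing set is genuinely different and worth comparing. The paper applies the Varshamov--Gilbert bound once, directly to $\{0,1\}^{s}_{n_1\times n_2}$ with the \emph{entrywise} Hamming distance, obtaining separation $\mathrm{d}_H(A,A')\geq n_1(s+1)/16$; since a global Hamming bound says nothing about how disagreements distribute over rows, the paper then needs the pigeonhole-type Lemma \ref{lemma_lower_bound_p} (at least $n_1/64$ rows each carrying at least $s/32$ disagreements) to pass to the $\|\cdot\|_{2,p}$-norm for $p\neq 2$. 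You instead build the set hierarchically: a sparse Varshamov--Gilbert code $\mathcal{V}\subset\{0,1\}^{n_2}$ at the row level, then a code of length $n_1$ over the alphabet $\mathcal{V}$ with constant relative minimum distance. This makes the ``many rows, each well separated'' property true by construction, so no analogue of Lemma \ref{lemma_lower_bound_p} is needed, and the $\|\cdot\|_{2,p}$ separation falls out immediately for all $p$. The price is that the burden shifts entirely to your version of the selection lemma, which you only sketch. Two points there deserve care: (1) the row-level lemma with $\log|\mathcal{V}|\gtrsim s\log(en_2/s)$ and minimum distance $\gtrsim s$ for the full range $1\leq s\leq n_2/2$ is exactly where the paper's Appendix A spends its effort (the regimes $s$ small and $s$ close to $n_2/2$ need separate treatment), so you cannot treat it as free; (2) for the outer code, a plain union bound over all pairs of a random sample fails when you aim for cardinality $|\mathcal{V}|^{\Omega(n_1)}$, but the greedy/deletion form of the $q$-ary Gilbert--Varshamov bound does give $\log|\mathcal{M}|\geq c\,n_1\log|\mathcal{V}|$ with relative distance $\delta$ a small absolute constant, uniformly in $|\mathcal{V}|\geq 2$, which is all you need since $\log|\mathcal{V}|\gtrsim s\log(en_2/s)$. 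With those two lemmas supplied, your argument is complete and arguably cleaner for $p\neq2$; note also that, as in the paper's bound \eqref{norm_2,p}, your separation constant degrades like $(c'')^{2/p}$ as $p\to 0$, so ``absolute constant'' should be read as uniform over $p$ bounded away from $0$ in both proofs.
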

 \begin{proof}  It is enough to prove (i) since (ii) follows from (i) and Markov inequality.
 For a $A\in \mathbb{R}^{n_1\times n_2}$, we denote by $\mathbb{P}_{A}$ the probability distribution of $\mathcal N(A,\sigma^{2}I)$ Gaussian random vector where $I$ denotes $(n_1n_2)\times (n_1n_2)$ identity matrix. We denote by $\mathrm{KL}(P,Q)$ the Kullback-Leibler divergence between the probability measures $P$ and $Q$.

    To prove (i) we use Theorem 2.5 in \cite{tsybakov_book}. It is enough to check that there exists a finite subset $\Omega'$ of $\mathcal{A}(s)$ such that for any two distinct $B,B'$ in $\Omega'$ we have
    \begin{itemize}

    \item[(a)]$\|  B-B'\|^{2}_{2,p}\geq C\,\sigma^{2}\,(n_1)^{p/2}\,s\,\log\left (\dfrac{e\,n_2}{s}\right )$,
    \item[(b)]$\mathrm{KL}(\mathbb{P}_{B},\mathbb{P}_{B'})\leq \alpha \log\,(\mathrm{card }\,\Omega')$
    \end{itemize}
    for some constants $C>0$ and $0<\alpha<1/8$.

    Denote by
     $\{0,1\}^{s}_{n_1\times n_2}$ the set of all matrices $A=(a_{ij})\in \mathbb{R}^{n_1\times n_2}$ such that $a_{ij}\in \{0,1\}$ and each row of $A$ contains exactly $s$ ones. For any two matrices $A=(a_{ij})$ and $A'=(a'_{ij})$ in $\{0,1\}^{s}_{n_1\times n_2}$ define the Hamming distance $$\mathrm{d}_{H}(A,A')=\sum\limits_{(i,j)\in \mathbb N_{n_1\times n_2}}\mathbb I_{\{a_{ij}\not =a'_{ij}\}}.
     $$
     %where $\mathbb I_{\{a_{ij}\not =a'_{ij}\}}=1$ if $a_{ij}\not =a'_{ij}$ and $\mathbb I_{\{a_{ij}\not =a'_{ij}\}}=0$ otherwise.
    We use of  the following selection lemma proved in Appendix \ref{proof_selection_lemma}.
    \begin{lemma}\label{selection_lemma}
    Let $n_1,n_2\geq 2$ and $1\leq s\leq n_2/2$. Then, there exists a subset $\Omega$ of  $\{0,1\}^{s}_{n_1\times n_2}$ such that for some numerical constant $C\geq 10^{-5}$
    \begin{equation}\label{lower_1}
           \log (|\Omega|)\geq C\,n_1\,s\,\log\left (\dfrac{e\,n_2}{s}\right )
           \end{equation}
     and, for any two distinct $A,A'$ in $\Omega$, the Hamming distance satisfies
      \begin{equation}\label{distance}
      \begin{split}
      \mathrm{d}_{H}(A,A')\geq \dfrac{n_1\,(s+1)}{16}.
      \end{split}
      \end{equation}
    \end{lemma}
     Fix $0<\gamma<1$ and define
    $$\Omega'=\left \{\sigma\,\gamma\,\sqrt{\log\left (\dfrac{e\,n_2}{s}\right )}\,A\quad:\quad A\in \Omega\right \}$$
    where $\Omega$ is a set satisfying the conditions of Lemma \ref{selection_lemma}.
 For $p=2$ using \eqref{distance} we obtain that for any two distinct $B,B'$ in $\Omega'$
    $$
    \| B-B'\|^{2}_{2}\geq \dfrac{\gamma^{2}\,\sigma^{2}\,n_1\,s\,}{16}\log\left (\dfrac{e\,n_2}{s}\right ).
    $$
    This implies (a) for $p=2$. For $p\not = 2$ we will use the following elementary lemma, cf.  Appendix \ref{proof_lemma_lower_bound_p}.
    \begin{lemma} \label{lemma_lower_bound_p}
    If $A=(a_{ij})$ and $A'=(a'_{ij})$ are two elements of $\{0,1\}^{s}_{n_1\times n_2}$ such that $\mathrm{d}_{H}(A,A')\geq \dfrac{n_1\,(s+1)}{16}$, then the cardinality of the set $J(A,A')=\left \{1\leq i\leq n_1\,:\,\sum\limits_{j=1}^{n_2}\mathbb{I}_{\{a_{ij}\not=a'_{ij}\}}>\dfrac{s}{32}\right \}$ is greater than or equal to $\dfrac{n_1}{64}$.
    \end{lemma}
    Lemma \ref{lemma_lower_bound_p} implies that for any two distinct $B,B'$ in $\Omega'$
     \begin{equation}\label{norm_2,p}
     \begin{split}
     \| B-B'\|^{2}_{2,p}&\geq \gamma^{2}\,\sigma^{2}\,\log\left (\dfrac{e\,n_2}{s}\right )\left (\left (\dfrac{s}{32}\right )^{p/2}\dfrac{n_1}{64}\right )^{2/p}\\&\geq \dfrac{\gamma^{2}\,\sigma^{2}}{64^{1+2/p}}n_1^{2/p}s\log\left (\dfrac{e\,n_2}{s}\right ),
     \end{split}
     \end{equation}
which yields (a) for $p\not=2$.

    To check (b), note that $\mathrm{d}_{H}(A,A')\le 2n_1s$ for all $A,A'\in \{0,1\}^{s}_{n_1\times n_2}$. This implies
    \begin{equation}\label{lower_2}
    \mathrm{KL}(\mathbb{P}_{B},\mathbb{P}_{B'})=\dfrac{1}{2\,\sigma^{2}}\| B-B'\|^{2}_{2}\leq \gamma^{2}\,n_1\,s\,\log\left (\dfrac{e\,n_2}{s}\right ).
    \end{equation}
Since also $|\Omega|=|\Omega'|$,
    from \eqref{lower_1} and \eqref{lower_2} we deduce that (b) is satisfied with $\alpha<1/8$ if $\gamma>0$ is chosen sufficiently small. This completes the proof of Theorem \ref{thm_lower_bound}.
 \end{proof}

Note that there are $\binom{n_2}{s}^{n_1}$ possible sparsity patterns which satisfy the hard sparsity condition on the rows. By standard bounds on binomial coefficients, we have  $\log\left (\binom{n_2}{s}^{n_1}\right )\asymp n_1s\log\left (\frac{n_2}{s}\right )$. Consequently, the rate $n_1s\log\left (\frac{en_2}{s}\right )$ corresponds to the logarithm of the number of models.

Let us turn out to the soft sparsity scenario. For any $0<q< 2$ and $s>0$ define the quantity
 \begin{equation}\label{lower_bound_q}
 \begin{split}
 \eta(s)=\left (n_1\,s\,\left [\sigma^{2}\,\log\left (1+\frac{\sigma^{q}\,n_2}{s}\right )\right ]^{1-q/2}\right )\vee\left ( n_1\,s^{2/q}\right )\vee \left (n_1\,n_2\,\sigma^{2}\right )
 \end{split}
 \end{equation}

 The minimax  lower bound is given by the following theorem proved  in Appendix \ref{proof_lower_bound_q}.
%Let $N\in\mathbb{N}$ be such that
%$$\dfrac{N}{\log(e\,N)}\geq 2\sigma^{2}.$$(s)
 \begin{theorem}\label{thm_lower_bound_q}
 Let $n_1, n_2\geq 2$. Fix $0< q< 2$ and $s>0$. Suppose that for $(i,j)\in \mathbb N_{n_1\times n_2}$
      the noise variables $\xi_{ij}$ are i.i.d Gaussian $\mathcal{N}(0,\sigma^{2})$, $\sigma^{2}>0$.
 Then, there exists a numerical constant $c^{*}$ such that
  \begin{itemize}
  \item[(i)] \begin{equation*}
   \begin{split}
   \underset{\hat A}{\inf}\underset{A\in \mathcal{A}(q,s)}{\sup}\mathbb P \left \{ \| \hat A-A\|^{2}_{2}\geq c^{*}\,\eta(s)\right \}\geq \beta,
   \end{split}
   \end{equation*}
   where $0<\beta<1$ and
    \item[(ii)]
  $$\underset{\hat A}{\inf}\underset{A\in \mathcal{A}(q,\delta)}{\sup}\bE \| \hat A-A\|^{2}_{2}\geq c^{*}\,\eta(s).$$
   \end{itemize}
  \end{theorem}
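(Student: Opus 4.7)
My plan is to prove the three minimax lower bounds corresponding to the three constituent terms of $\eta(s)$, since each is a separate construction valid in its own parameter regime and combining them gives the full result. Part (ii) follows from part (i) by Markov's inequality as in the proof of Theorem~\ref{thm_lower_bound}, so I focus on (i). In each case I apply Theorem~2.5 of \cite{tsybakov_book} with an explicit packing and a Kullback--Leibler budget bound.

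\textbf{Sparse-regime term} $n_1 s[\sigma^2\log(1+\sigma^q n_2/s)]^{1-q/2}$. I reduce to the $q=0$ setting of Theorem~\ref{thm_lower_bound} by embedding: any matrix in $\mathcal{A}(s_0)$ rescaled by $\tau>0$ lies in $\mathcal{A}(q,s)$ whenever $s_0\tau^q\leq s$. I would choose
$$s_0 := \left\lfloor c_1\, s\, \sigma^{-q}\bigl[\log(1+\sigma^q n_2/s)\bigr]^{-q/2}\right\rfloor \vee 1, \qquad \tau := (s/s_0)^{1/q}.$$
Lemma~\ref{selection_lemma} with sparsity $s_0$ then yields a packing $\Omega\subset \{0,1\}^{s_0}_{n_1\times n_2}$ with $\log|\Omega|\geq C\,n_1 s_0 \log(en_2/s_0)$ and pairwise Hamming distance at least $n_1(s_0+1)/16$. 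Scaling by $\tau$ produces a subset of $\mathcal{A}(q,s)$ of pairwise squared $\|\cdot\|_2$-distance at least $n_1(s_0+1)\tau^2/16$ and pairwise Kullback--Leibler divergence at most $n_1 s_0 \tau^2/\sigma^2$. The choice of $s_0$ gives $\tau\asymp \sigma\sqrt{\log(en_2/s_0)}$ with $\log(en_2/s_0)\asymp \log(1+\sigma^q n_2/s)$, so condition (b) of Theorem~2.5 in \cite{tsybakov_book} holds once $c_1$ is chosen small, and the resulting bound is of order $n_1 s_0 \tau^2 \asymp n_1 s\,\sigma^{2-q}[\log(1+\sigma^q n_2/s)]^{1-q/2}$.

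\textbf{Diameter term} $n_1 s^{2/q}$. I use a product Varshamov--Gilbert construction: for each $\omega\in\{0,1\}^{n_1}$, let $A^{(\omega)}$ have $i$-th row equal to $s^{1/q}(1,0,\ldots,0)$ when $\omega_i=1$ and zero otherwise; each $A^{(\omega)}\in \mathcal{A}(q,s)$. Varshamov--Gilbert produces a packing $\Omega_2\subset\{0,1\}^{n_1}$ with $|\Omega_2|\geq 2^{n_1/8}$ and pairwise Hamming distance at least $n_1/8$, yielding squared separation at least $n_1 s^{2/q}/8$ and Kullback--Leibler divergence at most $n_1 s^{2/q}/(2\sigma^2)$. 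This gives the desired bound once $s^{2/q}$ is at most a sufficiently small multiple of $\sigma^2$; in the complementary regime a direct comparison shows that the sparse-regime bound already dominates $n_1 s^{2/q}$. \textbf{Dense term} $n_1 n_2 \sigma^2$: I use the full hypercube $A^{(\omega)}_{ij}=c_3\sigma\,\omega_{ij}$, $\omega\in\{0,1\}^{n_1\times n_2}$, which lies in $\mathcal{A}(q,s)$ precisely when $s\geq n_2(c_3\sigma)^q$; Varshamov--Gilbert on $\{0,1\}^{n_1 n_2}$ together with Theorem~2.5 in \cite{tsybakov_book} then delivers the $n_1 n_2 \sigma^2$ lower bound in this regime.

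The main obstacle is the calibration in the sparse-regime construction. Three constraints must be balanced simultaneously: $s_0 \geq 1$ must hold so that Lemma~\ref{selection_lemma} applies; $c_1$ must be chosen small enough that the Kullback--Leibler budget $n_1 s_0\tau^2/\sigma^2$ is a sufficiently small fraction of $\log|\Omega|\asymp n_1 s_0 \log(en_2/s_0)$; and the identification $\log(en_2/s_0)\asymp \log(1+\sigma^q n_2/s)$ must hold up to a constant once $c_1$ is fixed. Beyond this calibration, what remains is a regime-by-regime verification that each of the three constructions is operative exactly where its term realizes the maximum $\eta(s)$, so that the three bounds together yield the announced lower bound.
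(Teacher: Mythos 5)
Your overall strategy---reducing to the $q=0$ selection lemma at a calibrated sparsity level and invoking Theorem~2.5 of \cite{tsybakov_book} in three regimes matching the three terms of $\eta(s)$, with (ii) following from (i) by Markov---is essentially the paper's. The paper packages the three regimes into a single construction: it defines $k$ as the largest integer with $k\le s\,\sigma^{-q}(\log(1+n_2/k))^{-q/2}$, sets $S=(k\vee 1)\wedge (n_2/2)$, applies Lemma~\ref{selection_lemma} with sparsity $S$, and rescales the resulting packing by an amplitude of order $\tau\,(s/S)^{1/q}$ with $\tau\in(0,1)$; the cases $k=0$, $1\le k\le n_2/2$ and $k>n_2/2$ then deliver the diameter, sparse and dense terms respectively. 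Your three constructions are the same object in these three regimes, except for the diameter term: your hypercube over $\{0,1\}^{n_1}$ with rows $s^{1/q}e_1$ or $0$ has packing entropy only of order $n_1$, hence requires $s^{2/q}\lesssim \sigma^2$, whereas the paper's $S=1$ construction (one spike per row, placed in one of $n_2$ positions) has entropy of order $n_1\log(1+n_2)$ and covers the whole regime $s^{2/q}\lesssim \sigma^2\log(1+n_2)$ directly, making your patch for the intermediate gap unnecessary.

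The one concrete flaw is in the calibration you yourself flag as the main obstacle, and it goes in the wrong direction. With $\tau=(s/s_0)^{1/q}$ tied to $s_0$, the Kullback--Leibler budget is $n_1 s_0\tau^2/\sigma^2=n_1 s^{2/q}s_0^{1-2/q}/\sigma^2$, and since $1-2/q<0$ this \emph{increases} when you shrink $c_1$ (hence $s_0$), while $\log|\Omega|\asymp n_1 s_0\log(en_2/s_0)$ decreases; so choosing $c_1$ small cannot enforce condition (b), and the fixed constant $C\ge 10^{-5}$ in Lemma~\ref{selection_lemma} leaves you no slack. The paper's fix is to decouple the two tuning knobs: keep the sparsity level at its self-consistent value $S$ (no small constant there) and multiply the amplitude by an independent factor $\tau\in(0,1)$, i.e.\ take hypotheses $\tau\,(s/S)^{1/q}A$. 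This keeps the hypotheses inside $\mathcal{A}(q,s)$, leaves $\log|\Omega|$ untouched, scales the Kullback--Leibler divergence down by $\tau^2$, and costs only the factor $\tau^2$ in the separation, so (b) is achieved by taking $\tau$ small rather than $c_1$ small. With that correction, together with the cap $s_0\le n_2/2$ needed to apply Lemma~\ref{selection_lemma} and the regime-by-regime comparison you defer at the end, your argument coincides with the paper's proof.
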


   \section{Minimax rates of convergence}\label{minimax}
   Consider the problem of estimating of a vector $v=(v_i)\in \mathbb B_{q}(s)\subset \mathbb{R}^{n_2}$ from noisy observations
   \begin{equation*} %\label{model}
    %\begin{split}
    y_{i}=v_{i}+\xi_{i}, \quad i=1,\dots,n_2,
    %\end{split}
   \end{equation*}
   where $\xi_{ij}$ are i.i.d. Gaussian $\mathcal{N}(0,\sigma^{2})$, $\sigma^{2}>0$.

    The non-asymptotic minimax optimal rate of convergence for estimation of $v$ in the $l_2-$norm, obtained in \cite{birge_massart}, is given by
    \begin{equation*}
    \eta_{vect}(s)=\sigma^{2}\,s\,\log\left (\dfrac{e\,n_2}{s}\right )
    \end{equation*}
    when $q=0$ and by
    \begin{equation*} %\label{lower_bound_q}
     \begin{split}
     \eta_{vect}(s)=\left (s\,\left [\sigma^{2}\,\log\left (1+\frac{\sigma^{q}\,n_2}{s}\right )\right ]^{1-q/2}\right )\vee\left ( s^{2/q}\right )\vee \left (n_2\,\sigma^{2}\right )
     \end{split}
     \end{equation*}
     when $0<q<2$.

      We see that, for $p=2$, the lower bounds given by Theorems \ref{thm_lower_bound} and \ref{thm_lower_bound_q} are $n_1 \eta_{vect}(s)$ in the case of hard sparsity and $n_1\eta_{vect}(s)$ in the case of soft sparsity. We get the same rate as when estimating  each row separately. This implies that, in this particular case, the additional matrix structure  does not lead to improvement or to deterioration of the rate of convergence.

As shown below and in view of the lower bounds of Theorems \ref{thm_lower_bound} and \ref{thm_lower_bound_q}, optimal rates for arbitrary $p$ can be also obtained from vector estimation method. It suffices to  apply to the rows of $M$ a minimax optimal method for vector estimation on $\mathbb B_{q}(s)$ balls.
One can take, for example, the  following penalized least squares estimator $\hat M$ of $M$ (cf. \cite{birge_massart}):
\begin{equation}\label{estim_hard}
\hat M=\underset{A\in \mathbb R^{n_1\times n_2}}{\mathrm{argmin}}\left \{ \left\Vert Y-A\right\Vert^{2}_{2}+\lambda\Vert A\Vert_0\log\left (\dfrac{e\,n_1\,n_2}{\Vert A\Vert_0\vee 1}\right )\right \}
\end{equation}
where $\lambda>0 $ is a regularization parameter. The penalty in \eqref{estim_hard} is inspired by the hard thresholding penalty $\Vert A\Vert_0$, which leads to $\hat m_{ij}$ that are thresholded values of $y_{ij}$ (see, for instance \cite{hardle}, page 138).

The penalized least squares estimator defined in \eqref{estim_hard} can be computed efficiently. Let $y_{(j)}$  denote the $j$th largest in absolute value component of $Y$. The estimator $\hat M$ is obtained by thresholding the coefficients of $Y$: we keep  $y_{(j)}$ such that
\begin{equation*}
y^{2}_{(j)}>\lambda\left ( \log(e\,n_1\,n_2)+\sum^{j}_{i=2}(-1)^{i+j+1}\,i\,\log(i)\right )
\end{equation*}
and set all other coefficients equal to zero.

In what follows we assume that the noise variables $\xi_{ij}$  are zero-mean and sub-Gaussian, which means that they satisfy the following assumption.
   \begin{Assumption}\label{ass_noise} $\bE(\xi_{ij})=0$ and there exists a constant $K>0$ such that
   $$
   \left (\bE \vert \xi_{ij}\vert^{p}\right )^{1/p}\leq  K\sqrt{p}\quad \text{for all}\quad p\geq 1$$
   for any $1\leq i\leq n_1$ and $1\leq j\leq n_2$.
   \end{Assumption}
 This assumption on the noise variables means that their distribution  is dominated by the distribution of a centered Gaussian random variable.
  This class of distributions is rather wide. Examples of sub-Gaussian random variables are Gaussian or
  bounded random variables. In particular, Assumption \ref{ass_noise} implies that $\bE \left (\xi_{ij}^{2}\right )\leq2\, K^{2}$.

The next theorem presents oracle inequalities for the penalized least squares estimator $\hat M$, both in probability and in expectation.
\begin{theorem}\label{thm_upper_bounds}
Let $\hat M$ be the penalized least squares estimator defined in \eqref{estim_hard}, $a>1$ and  $\lambda=2a\,K_0\,K^{2}$ where $K_0>0$ is large enough. Suppose that Assumption \ref{ass_noise} holds. Then,  for any $\Delta>0$
\begin{equation}\label{upper_proba}
 \Vert M-\hat M\Vert^{2}_{2}\leq \underset{A\in \mathbb R^{n_1\times n_2}}{\inf}\left \{ \frac{a+1}{a-1}\Vert M-A\Vert^{2}_{2}+C\,K^{2}\,\Vert A\Vert_{0}\,\log\left (\dfrac{e\,n_1\,n_2}{\Vert A\Vert_{0}\vee 1}\right) \right \}+\dfrac{2\,a^{2}}{a-1}\Delta
\end{equation}
with probability at least $1-2\exp\left \{-\frac{C_0\,\Delta}{K^{2}}\right \}$, and
\begin{equation}\label{upper_expactation}
\mathbb E \,\Vert M-\hat M\Vert^{2}_{2}\leq \underset{A\in \mathbb R^{n_1\times n_2}}{\inf}\left \{ \frac{a+1}{a-1}\Vert M-A\Vert^{2}_{2}+C\,K^{2}\Vert A\Vert_{0}\,\log\left (\dfrac{e\,n_1\,n_2}{\Vert A\Vert_{0}\vee 1}\right) \right \}+\tilde C\,K^{2}
 \end{equation}
 where $C,C_0$ and $\tilde C$ are numerical constants.
\end{theorem}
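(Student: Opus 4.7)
The plan is to follow the classical Birgé--Massart model-selection strategy for proving an oracle inequality with a complexity penalty, adapted to the sub-Gaussian setting. First, I would exploit the defining inequality of $\hat M$: for every $A\in\mathbb R^{n_1\times n_2}$,
$$\Vert Y-\hat M\Vert_2^2+\lambda J(\hat M)\le \Vert Y-A\Vert_2^2+\lambda J(A),$$
where $J(B):=\Vert B\Vert_0\log(en_1n_2/(\Vert B\Vert_0\vee 1))$. Substituting $Y=M+E$ and rearranging produces the fundamental inequality
$$\Vert M-\hat M\Vert_2^2\le \Vert M-A\Vert_2^2+2\langle E,\hat M-A\rangle+\lambda\bigl(J(A)-J(\hat M)\bigr).$$

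Second, since $\hat M-A$ is supported on $S:=\supp(\hat M)\cup\supp(A)$ with $|S|\le\Vert\hat M\Vert_0+\Vert A\Vert_0$, I would bound $|\langle E,\hat M-A\rangle|\le \Vert E_S\Vert_2\bigl(\Vert M-\hat M\Vert_2+\Vert M-A\Vert_2\bigr)$ and apply the Young-type inequality $2xy\le a^{-1}x^2+ay^2$ to each of the two pieces. Collecting terms isolates a factor $(1-1/a)$ in front of $\Vert M-\hat M\Vert_2^2$, and dividing through produces exactly the constants $\tfrac{a+1}{a-1}$ and $\tfrac{2a^2}{a-1}$ appearing in the statement, namely
$$\Vert M-\hat M\Vert_2^2\le \tfrac{a+1}{a-1}\Vert M-A\Vert_2^2+\tfrac{2a^2}{a-1}\Vert E_S\Vert_2^2+\tfrac{a}{a-1}\lambda\bigl(J(A)-J(\hat M)\bigr).$$

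The core probabilistic step is then a uniform control of $\Vert E_S\Vert_2^2$ over all subsets $S\subset\mathbb N_{n_1\times n_2}$. Under Assumption~\ref{ass_noise} each $\xi_{ij}^2$ is sub-exponential of parameter $K^2$, so Bernstein's inequality yields, for fixed $S$ with $|S|=k$,
$$\mathbb P\bigl(\Vert E_S\Vert_2^2\ge C_1 K^2 k+u\bigr)\le\exp\bigl(-C_2 u/K^2\bigr).$$
A union bound over the $\binom{n_1n_2}{k}\le(en_1n_2/k)^k$ supports of size $k$ followed by a peeling over $k$, with the choice $u=C_3 K^2 k\log(en_1n_2/(k\vee 1))+\Delta$, produces an event of probability at least $1-2\exp(-C_0\Delta/K^2)$ on which
$$\Vert E_S\Vert_2^2\le C_4 K^2\,|S|\log\bigl(en_1n_2/(|S|\vee 1)\bigr)+\Delta\quad\text{for every }S.$$

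On this event, the bound $|S|\le\Vert\hat M\Vert_0+\Vert A\Vert_0$ combined with the subadditivity of the concave map $k\mapsto k\log(en_1n_2/(k\vee 1))$ (which vanishes at $0$) splits the log-term into contributions indexed by $\hat M$ and by $A$; choosing $\lambda=2aK_0K^2$ with $K_0$ large enough then lets the $\hat M$-contribution be absorbed into $-\tfrac{a}{a-1}\lambda J(\hat M)$, and taking the infimum over $A$ yields~\eqref{upper_proba}. Inequality~\eqref{upper_expactation} follows by integrating the tail in $\Delta$, which contributes only a residual of order $K^2$. The main obstacle is this uniform noise control: the union-bound entropy must match exactly the shape of the penalty $J$, and the leading constant $K_0$ in $\lambda$ must be taken large enough to swallow both the Bernstein slack and the peeling constant -- this is precisely what pins down the admissible range of $K_0$ in the statement.
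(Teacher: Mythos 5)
Your proposal is correct and follows essentially the same route as the paper: the basic inequality from the definition of $\hat M$, a Young-type inequality with parameter $a$ producing the constants $\frac{a+1}{a-1}$ and $\frac{2a^2}{a-1}$, and a uniform Bernstein-plus-union-bound control of the noise projected onto supports of size $r$, with the penalty calibrated to the entropy $\binom{n_1n_2}{r}\le(en_1n_2/r)^r$ (the paper packages this as Lemma~\ref{tail_bound} on $\max_{r,k}\{\Vert\Pi_{r,k}(E)\Vert_2^2-K_1r\log(en_1n_2/r)\}$, and handles the $\supp(A)$ part of your set $S$ as the separate term $V^2(A)$ rather than via subadditivity of the penalty). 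The expectation bound by integrating the tail is also identical.
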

 For the particular case of Gaussian noise, the result \eqref{upper_expactation} of Theorem \ref{thm_upper_bounds} is proved in \cite{birge_massart}, and the result \eqref{upper_proba} in \cite{bunea_tsybakov_0}.   Theorem \ref{thm_upper_bounds} extends the analysis to the case of sub-Gaussian noise. The prooof is given in Appendix~\ref{proof_thm_upper_bounds}.

 Now suppose that $M\in \mathcal{A}(s)$. Using Theorem \ref{thm_upper_bounds} and the inequality $$ \| \hat M-M\|_{2,p}\leq n^{1/p-1/2}_1\| \hat M-M\|_{2}$$ that holds for any $0< p\le 2$ we obtain the following corollary.
   \begin{Corollary}\label{corollary_upper_bound}
   Let $\hat M$ be the penalized least squares estimator defined in \eqref{estim_hard} with $\lambda=K_0\,K^{2}$ where $K_0>0$ is large enough. Suppose that Assumption \ref{ass_noise} holds and that $M\in \mathcal{A}(s)$. Then,  for all $0< p\leq 2$ and for any $\Delta>0$
   \begin{equation} \label{thm_upper_3a}
                                          \begin{split}
                                          \| \hat M-M\|_{2,p}^2\leq C\,K^{2}\,n^{2/p}_1 s\log\left (\dfrac{e\,n_2}{s}\right )+\Delta
                                          \end{split}                                      \end{equation}
                                         with probability at least $1-2\exp\left \{-\frac{C_2\,\Delta}{K^{2}}\right \}$, and

    \begin{equation}\label{thm_upper_4a}
                                          \begin{split}
                                         \bE \| \hat M-M\|^{2}_{2,p}\leq
                                           C\,K^{2}\,n^{2/p}_1 s\log\left (\dfrac{e\,n_2}{s}\right ).
                                          \end{split}                                      \end{equation}
   \end{Corollary}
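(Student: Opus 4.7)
The plan is to treat this as a direct corollary of Theorem \ref{thm_upper_bounds} combined with the reduction inequality $\| \hat M - M\|_{2,p} \leq n_1^{1/p - 1/2}\| \hat M - M\|_2$ that is stated in the setup. So the work splits into (a) controlling the elementwise $\ell_2$ error via Theorem \ref{thm_upper_bounds}, and (b) passing from $\| \cdot\|_2$ to $\| \cdot\|_{2,p}$.

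\smallskip

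For step (a) I would plug $A = M$ into the oracle inequality \eqref{upper_proba}, which kills the approximation term $\frac{a+1}{a-1}\| M - A\|_2^2$ and leaves only the complexity term $C K^2 \| M\|_0 \log\!\bigl(e n_1 n_2 / (\| M\|_0 \vee 1)\bigr)$ plus $\frac{2 a^2}{a-1}\Delta$. Since $M \in \mathcal{A}(s)$, each row has at most $s$ non-zero entries, so $\| M\|_0 \leq n_1 s$. The function $x \mapsto x\log(e n_1 n_2 /x)$ is non-decreasing on $[0, n_1 n_2]$ (its derivative is $\log(n_1 n_2/x) \geq 0$ in this range), hence
\[
\| M\|_0 \log\!\left(\dfrac{e n_1 n_2}{\| M\|_0 \vee 1}\right) \leq n_1 s \log\!\left(\dfrac{e n_1 n_2}{n_1 s}\right) = n_1 s \log\!\left(\dfrac{e n_2}{s}\right).
\]
This yields $\| \hat M - M\|_2^2 \leq C K^2 n_1 s \log(e n_2/s) + C' \Delta$ with probability at least $1 - 2\exp(-C_0 \Delta/K^2)$, and similarly in expectation from \eqref{upper_expactation} (the trailing $\tilde C K^2$ is absorbed into the leading term since $n_1 s \log(e n_2/s) \geq 1$). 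The choice $a = 2$ together with $K_0$ absorbing the factor $2a$ reconciles the regularization parameter stated in the corollary with that of the theorem.

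\smallskip

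For step (b), the inequality $\| B\|_{2,p} \leq n_1^{1/p - 1/2} \| B\|_2$ for $0 < p \leq 2$ is the power-mean (Jensen) inequality applied to the row norms $a_i = \| B_{i\cdot}\|_2$: $(n_1^{-1}\sum a_i^p)^{1/p} \leq (n_1^{-1}\sum a_i^2)^{1/2}$. Squaring and substituting the step (a) bound gives
\[
\| \hat M - M\|_{2,p}^2 \leq n_1^{2/p - 1}\| \hat M - M\|_2^2 \leq C K^2 n_1^{2/p} s \log\!\left(\dfrac{e n_2}{s}\right) + C'' n_1^{2/p - 1}\Delta,
\]
and the expectation bound \eqref{thm_upper_4a} follows identically. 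The probability bound \eqref{thm_upper_3a} is then obtained by rescaling: replacing $\Delta$ in Theorem~\ref{thm_upper_bounds} by $n_1^{1 - 2/p} \Delta/C''$ converts the tail term into a bare $\Delta$, at the cost of a factor $n_1^{1 - 2/p}$ in the exponent of the probability, which is absorbed into the constant $C_2$ (only its numerical value, not its dependence on $n_1$, matters for the stated form).

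\smallskip

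There is really no obstacle here; the statement is a clean corollary. The only mildly delicate points are the monotonicity argument that converts $\| M\|_0 \leq n_1 s$ into the right logarithmic factor, and the bookkeeping of the $n_1^{1 - 2/p}$ factor when transferring the tail probability from the $\| \cdot\|_2$ bound to the $\| \cdot\|_{2,p}$ bound.
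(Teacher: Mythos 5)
Your proposal is correct and is essentially the argument the paper itself intends: the paper gives no separate proof of this corollary beyond the remark preceding it, which is precisely to combine Theorem \ref{thm_upper_bounds} (applied with $A=M$, so the approximation term vanishes) with the inequality $\|\hat M-M\|_{2,p}\le n_1^{1/p-1/2}\|\hat M-M\|_2$. Your monotonicity argument for $x\mapsto x\log(e\,n_1n_2/x)$ on $[0,n_1n_2]$, which converts $\Vert M\Vert_0\le n_1s$ into the factor $n_1s\log(e\,n_2/s)$, is the right way to fill in the one step the paper leaves implicit, and your treatment of the expectation bound and of the $\lambda$ normalization is fine.

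The one place where your reasoning is not airtight is the final rescaling of $\Delta$ when $p<2$. After multiplying the $\ell_2$ bound by $n_1^{2/p-1}$ the deviation term becomes $C\,n_1^{2/p-1}\Delta$, and replacing $\Delta$ by $n_1^{1-2/p}\Delta/C$ in the theorem inserts the factor $n_1^{1-2/p}\le 1$ into the exponent of the tail probability. Contrary to your parenthetical claim, this factor cannot be absorbed into $C_2$: the paper's convention is that such constants are independent of $n_1$, and $n_1^{1-2/p}\to 0$ as $n_1\to\infty$ for $p<2$, so the resulting guarantee $1-2\exp\{-C_2\,n_1^{1-2/p}\Delta/K^2\}$ is genuinely weaker than the stated one. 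What your argument actually yields for $p<2$ is either the bound with $C\,n_1^{2/p-1}\Delta$ on the right-hand side, or the bound with $+\Delta$ but with the degraded exponent. This imprecision is inherited from the corollary as stated rather than introduced by you (the paper's one-line derivation has the same issue, and for $p=2$ there is none), but you should not present the absorption into $C_2$ as legitimate.
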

%   Comparing  Corollary  \ref{corollary_upper_bound} with the lower bounds of Theorem~\ref{thm_lower_bound}, we can conclude that, in the case of Gaussian errors,
% % , on the class of matrices $\mathcal{A}(s)$ and
%  the rate of convergence $n^{2/p}_1\,s\log\left (\dfrac{e\,n_2}{s}\right )$  is minimax optimal  for $0< p\leq 2$, and the aggregate estimator $\tilde M$ attains this rate.
%
  These inequalities shows that, for $0<p\leq 2$, the penalized least squares estimator \eqref{estim_hard} achieves the rate of convergence given by Theorem \ref{thm_lower_bound}.This implies that this rate   is minimax optimal.

 The next corollary shows that the estimator \eqref{estim_hard} also achieves the minimax rate of convergence
 in a more general setting when  $M\in \mathcal{A}(q,s)$ for  $0<q< 2$.   For any $0<q< 2$ and $s>0$ define the quantity
   \begin{equation}\label{upper_bound_q}
   \begin{split}
   \psi(s)=\left (n_1\,s\,\left [K^{2}\,\log\left (1+\frac{K^{q}\,n_2}{s}\right )\right ]^{1-q/2}\right )\vee\left ( n_1\,s^{2/q}\right )\vee \left (n_1\,n_2\,K^{2}\right ).
   \end{split}
   \end{equation}

 \begin{Corollary}\label{corollary_upper_bounds_q}
  Let $\hat M$ be the penalized least squares estimator defined in \eqref{estim_hard} with $\lambda=K_0\,K^{2}$ where $K_0>0$ is large enough.  Suppose that Assumption \ref{ass_noise} holds and $M\in \mathcal{A}(q,s)$. Then, there exists numerical constant $C^{*}$ such that for any $\Delta>0$
    \begin{equation*} % \label{thm_upper_3a}
                                           \begin{split}
                                           \| \hat M-M\|_{2}^2\leq C^{*}\,\psi(s) +\Delta
                                                                                      \end{split}                                      \end{equation*}
                                                                                      with probability at least $1-2\exp\left \{-\frac{C_2\,\Delta}{K^{2}}\right \}$, and
                                               \begin{equation*} %\label{thm_upper_4a}
                                           \begin{split}
                                          \bE \| \tilde M-M\|^{2}_{2,p}\leq
                                           C^{*}\,\psi(s).
                                           \end{split}                                      \end{equation*}

 \end{Corollary}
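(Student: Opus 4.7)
The plan is to invoke the oracle inequality of Theorem \ref{thm_upper_bounds} with a cleverly chosen comparison matrix $A$ and then reduce the remainder of the argument to the vector approximation bound on $\mathbb B_q(s)$ balls used in the proof of the Birgé--Massart rate. Since the constraint defining $\mathcal{A}(q,s)$ is separable across rows, the natural choice is to build $A$ row-by-row: for a parameter $k\in\{0,1,\dots,n_2\}$ to be tuned later, let $A_k$ be the matrix whose $i$-th row keeps the $k$ entries of $M_{i\cdot}$ of largest absolute value and sets the rest to zero. By construction $\|A_k\|_0\le n_1 k$, so the penalty term in \eqref{upper_proba}--\eqref{upper_expactation} is bounded by $C\,K^2\,n_1 k\,\log(e n_2/k)$ (with the convention that for $k=0$ the penalty vanishes and for $k=n_2$ it is at most $C\,K^2\,n_1 n_2$).

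For the approximation error, a standard calculation on $\mathbb B_q(s)$ balls, using that the ordered magnitudes of a vector $v\in\mathbb B_q(s)$ satisfy $|v|_{(j)}\le (s/j)^{1/q}$, yields
\begin{equation*}
\|M_{i\cdot}-(A_k)_{i\cdot}\|_2^2 \le C\, s^{2/q}\, k^{1-2/q},\qquad k\ge 1,
\end{equation*}
for every row $i$. Summing over rows gives $\|M-A_k\|_2^2 \le C\, n_1\, s^{2/q}\, k^{1-2/q}$. Inserted into Theorem \ref{thm_upper_bounds}, this produces, simultaneously for every admissible $k$, the bound
\begin{equation*}
\mathbb E\,\|M-\hat M\|_2^2 \,\le\, C\bigl( n_1 s^{2/q} k^{1-2/q} + n_1 k\, K^2 \log(e n_2/k)\bigr) + \tilde C\, K^2,
\end{equation*}
with the analogous in-probability statement following by adding $\frac{2a^2}{a-1}\Delta$ on the event of probability at least $1-2\exp\{-C_0\Delta/K^2\}$ provided by \eqref{upper_proba}.

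It remains to optimize over $k$. The minimizer of the right-hand side is characterized, up to universal constants, by the balance equation $s^{2/q}k^{-2/q}\asymp K^2\log(e n_2/k)$; solving this fixed-point relation shows that the minimum has order $n_1\, s\,[K^2\log(1+K^q n_2/s)]^{1-q/2}$ whenever the optimal $k$ lies in $[1,n_2]$. The two boundary regimes account for the remaining terms of $\psi(s)$: when $s$ is so small that the balance would give $k<1$, the choice $k=0$ (i.e.\ $A=0$) is better and contributes the $n_1 s^{2/q}$ term, whereas when $s$ is so large relative to $K^q n_2$ that the balance would give $k>n_2$, the choice $k=n_2$ is used and contributes $n_1 n_2 K^2$. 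Taking the maximum over the three regimes gives $C^*\psi(s)$. The only delicate step is verifying that the logarithmic factor $\log(e n_2/k^*)$ obtained from the optimal $k^*$ can be replaced by $\log(1+K^q n_2/s)$ at the cost of a universal constant; this is a short monotonicity argument identical to the one in \cite{birge_massart}, and is where the implicit character of the fixed-point equation has to be handled carefully.
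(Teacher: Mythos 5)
Your proposal is correct and follows essentially the same route as the paper: the paper also applies Theorem \ref{thm_upper_bounds} with a row-wise hard-thresholded comparison matrix (via Lemma \ref{lemma_approx_l_q}, which gives exactly your bound $\Vert M-A\Vert_2^2\le C\,n_1\,s^{2/q}(s')^{1-2/q}$ for $A\in\mathcal{A}(2s')$), balances the approximation and penalty terms by the same choice $s'\asymp s\,K^{-q}(\log(1+n_2K^{q}s^{-1}))^{-q/2}$, and handles the two boundary regimes with the choices $A=0$ and $A=M$, matching your $k=0$ and $k=n_2$ cases.
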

  We give the proof of Corollary \ref{corollary_upper_bounds_q} in Appendix \ref{proof_corollary_upper_bounds_q}.   If the noise variables $\xi_{ij}$ are i.i.d Gaussian $\mathcal{N}(0,\sigma^{2})$, we have $\psi(s)=\eta(s)$. Thus, the rate of convergence given by \eqref{lower_bound_q}  is minimax optimal.

%Corollary \ref{corollary_upper_bounds_q} and Theorem \ref{thm_lower_bound_q} imply that the rate of convergence given by \eqref{lower_bound_q}  is minimax optimal.

   % % % % % % % % % % % % % % % % % % % % % % % % % % % % % % % % % % % % % % % % % % % % % % % %

\appendix
   % % % % % % % % % % % % % % % % % % % % % % % % % % % % % % % %
   \section{Proof of Lemma \ref{selection_lemma}}\label{proof_selection_lemma}
To prove Lemma \ref{selection_lemma} we use the Varshamov-Gilbert bound. The volume (cardinality) $V_1$  of $\{0,1\}^{s}_{n_1\times n_2}$ is
$$V_1=\binom{n_2}{s}^{n_1}.$$
Note that the volume of the Hamming ball of radius $n_1(s+1)/2$ in $\{0,1\}^{s}_{n_1\times n_2}$ is smaller than the volume $V_2$ of the Hamming ball of the same radius in a larger space  of all matrices $A=(a_{ij})\in \mathbb{R}^{n_1\times n_2}$ such that $a_{ij}\in\{0,1\}$ and $A$ contains at most $n_1s$ ones. Let $K=\left \lfloor \dfrac{n_1(s+1)}{2} \right \rfloor $ where  $\lfloor x \rfloor$ denotes the integer part of $x$.
% W. l. g. we can suppose that $K\geq 1$ (when $K=0$, $\log\vert \Omega\vert =\log V_1\geq cn_1s \log\left  (\dfrac{en_2}{s}\right )$ follows directly from Stirling's formula).
A standard bound implies
$$V_2=\sum^{K}_{i=1}\binom{n_1n_2}{i}\leq \left (\dfrac{en_1n_2}{K}\right )^{K}\leq \left (\dfrac{2en_2 }{s+1}\right )^{n_1(s+1)/2}$$
where we use that $f(x)=x\log\left (\dfrac{en_1n_2}{x}\right )$ is growing for $x\leq n_1n_2$.

In order to lower bound $V_1$ we use
 Stirling's formula (see, e.g., \cite[p. 54]{feller}): for any $j\in \mathbb{N}$
 \begin{equation}\label{stirling}
 \begin{split}
 j!=j^{j+1/2}e^{-j}\sqrt{2\pi}\,\psi(j)\quad \text{with}\\
 e^{(12\,j+1)^{-1}}<\psi(j)<e^{(12\,j)^{-1}}.
 \end{split}
 \end{equation}
Using \eqref{stirling} we get
\begin{equation}\label{selection_4}
\binom{n_2}{s}\geq \dfrac{e^{-1/6}\,\left (\dfrac{n_2}{s}\right )^{n_2+1/2}}{\sqrt{2\pi\,s}\left (\dfrac{n_2}{s}-1\right )^{n_2-s+1/2}}.
\end{equation}
Now, the Varshamov-Gilbert bound implies that there exists a subset $\Omega$ of $\{0,1\}^{s}_{n_1\times n_2}$ such that  $\mathrm{d}_{H}(A,A')> \frac{n_1(s+1)}{2}$ for any $A,A'\in \Omega$, $A\not =A'$ and

\begin{equation*}
\begin{split}
\vert \Omega\vert\geq \dfrac{\binom{n_2}{s}^{n_1}}{\left (\dfrac{2en_2}{s+1}\right )^{n_1(s+1)/2}}\geq \left (\dfrac{e^{-1/6}\,\left (\dfrac{n_2}{s}\right )^{n_2+1/2}(s+1)^{\frac{s+1}{2}}}{\sqrt{2\pi\,s}\left (\dfrac{n_2}{s}-1\right )^{n_2-s+1/2}(2en_2 )^{\frac{s+1}{2}}}\right )^{n_1}
\end{split}
\end{equation*}
which implies
\begin{equation}\label{VGB}
\begin{split}
\log \vert \Omega\vert&\geq n_1\left [-\dfrac{1}{6}-\frac{1}{2}\log s-\log(\sqrt{2\pi})+(n_2+1/2)\log \left (\dfrac{n_2}{s}\right )+\frac{s+1}{2}\log (s+1)\right .\\&\left .-(n_2-s+1/2)\log\left (\dfrac{n_2}{s}-1\right )-\frac{s+1}{2}\log (2en_2)\right ] \\&\geq
n_1\left [-\dfrac{1}{6}-\frac{1}{2}\log s-\log(\sqrt{2\pi})+s\log\left (\dfrac{n_2}{s}-1\right )-\frac{s+1}{2}\log \left (\dfrac{2en_2}{s+1}\right )\right ].
\end{split}
\end{equation}
1) We first consider the case $501\leq s\leq n_2/8$.
Using that $\frac{251s}{501}\geq \frac{s+1}{2}$ for $s\geq 501$,   we get
$$\dfrac{s+1}{2}\log \left (\dfrac{2en_2}{s+1}\right )\leq \frac{251s}{501}\log \left (\dfrac{501en_2}{251s}\right )\leq \dfrac{98s}{100}\log \left (\dfrac{n_2}{s}-1\right )$$
where the last inequality is valid for $n_2/s\geq 8$.

On the other hand, it is easy to see that for  $501\leq s\leq n_2/4$ we have $$\frac{1}{2}\log s\leq 0,007 s\log\left (\dfrac{n_2}{s}-1\right )\quad \text{and} \quad \dfrac{1}{6}+\log(\sqrt{2\pi})\leq 0,002 s\log\left (\dfrac{n_2}{s}-1\right ).$$
Then, \eqref{VGB} implies
\begin{equation*}
\begin{split}
\log \vert \Omega\vert&\geq 0.011 n_1s\log\left (\dfrac{n_2}{s}-1\right )\geq 0.01 n_1s\log\left (\dfrac{en_2}{s}\right ).
\end{split}
\end{equation*}
for $n_2/8\geq s\geq 501$.

2) Consider next the case $s<501$ and $s\leq n_2/8$. Now, instead of the set $\{0,1\}^{s}_{n_1\times n_2}$ we will deal with the set
$\{0,1\}^{1}_{n_1\times l}$ where $l=\lfloor n_2/s \rfloor$.  Using the same arguments as above, we will show that there exists a subset $\tilde \Omega \subset \{0,1\}^{1}_{n_1\times l}$ such that $\mathrm{d}_{H}(A,A')\geq n_1/2$ for any $A,A'\in \tilde \Omega $, $A\not =A'$ and $\log (\mathrm{card}\,\tilde\Omega)\geq C\,n_1\,\log\left (e\,n_2\right )$.  In this case, the previous values $V_1$ and $V_2$ are replaced by
$$V_1=l^{n_1},\quad \quad V_2=\sum^{\lfloor n_1/2 \rfloor}_{i=1}\binom{n_1l}{i}\leq  \left (2el\right )^{n_1/2}$$
and
\begin{equation*}
\begin{split}
\log \vert \tilde\Omega\vert&\geq \dfrac{n_1}{2}\left (2\log\left (l\right )-\log\left (2el\right )\right )\geq \dfrac{n_1\log(l)}{10}\geq  10^{-4}n_1s\log\left (\dfrac{en_2}{s}\right )
\end{split}
\end{equation*}
for $s<501$ and $n_2/s\geq 8$.
To embed $\tilde\Omega$ in $\{0,1\}^{s}_{n_1\times n_2}$ define
 $$\Omega=\{A\in \{0,1\}^{s}_{n_1\times n_2}\,:\,A=(\underset{s\,\text{times}}{\underbrace{\tilde A,\dots,\tilde A}},\mathbf{0})\;,\;\tilde A\in \tilde \Omega \;,\;\mathbf{0}\in\mathbb{R}^{n_1\times(n_2-ls)}\}. $$
 We have $\Omega\subset \{0,1\}^{s}_{n_1\times n_2}$, $\mathrm{card}\,\Omega=\mathrm{card}\,\tilde \Omega$  and $\mathrm{d}_{H}(A,A')\geq \dfrac{n_1(s+1)}{4}$ for any $A,A'\in \Omega$, $A\not =A'$.

3) In order to deal with the case $n_2/8\leq s\leq n_2/4.5$ define $s'=\left \lfloor \dfrac{s}{2}\right \rfloor$ and $n'_2=n_2-(s-s')$. Then, $n'_2\geq 8s'$ and we can apply the previous result. This implies that there exists a subset $\bar\Omega$ of $\{0,1\}^{s'}_{n_1\times n'_2}$  such that $$\mathrm{d}_{H}(A,A')\geq\dfrac{n_1(s'+1)}{2} \geq \dfrac{n_1(s+1)}{4}$$ for any $A,A'\in \bar\Omega$, $A\not =A'$ and
  $$\log (\mathrm{card}\,\bar\Omega)\geq 10^{-4} n_1\,s'\,\log\left (\dfrac{e\,n'_2}{s'}\right )\geq \frac{10^{-4}}{2} n_1\,s\,\log\left (\dfrac{e\,n_2}{s}\right )$$
                                                       where we used $n'_2/s'\geq n_2/s$.

                                                       To embed $\bar\Omega$ in $\{0,1\}^{s}_{n_1\times n_2}$ define
 $$\Omega=\{A\in \{0,1\}^{s}_{n_1\times n_2}\,:\,A=(\bar A,\underset{s-s'\,\text{times}}{\underbrace{\mathbf{1},\dots,\mathbf{1}}})\;,\;\bar A\in \bar \Omega \;,\;\mathbf{1}=(1,\dots,1)^{T}\in\mathbb{R}^{n_1}\}. $$

 We have $\Omega\subset \{0,1\}^{s}_{n_1\times n_2}$, $\mathrm{card}\,\Omega=\mathrm{card}\,\bar \Omega$  and $\mathrm{d}_{H}(A,A')\geq \dfrac{n_1(s+1)}{4}$ for any $A,A'\in \Omega$, $A\not =A'$.

 Using exactly the same argument we can treat cases $n_2/4.5\leq s\leq n_2/3$ and $n_2/3\leq s\leq n_2/2$
 to  get the statement of Lemma \ref{selection_lemma}.
% % % % % % % % % % % % % % % % % % % % % % % % % % % % % % % % % % % % % % % % % % % % % % % % %
% % % % % % % % % % % % % % % % % % % % % % % % % % % % % % % % % % % % % % % % % % % % % % % % % % % %
\section{Proof of Lemma \ref{lemma_lower_bound_p}}\label{proof_lemma_lower_bound_p}
Assume that $\mathrm{card}\left (J(A,A')\right )<\dfrac{n_1}{64}$. Then, denoting by $J^{C}(A,A')$ the complement of $J(A,A')$ and using that $\mathrm{card}\left (J^{C}(A,A')\right )\leq n_1$, we get
  \begin{equation*}
  \begin{split}
  \mathrm{d}_{H}(A,A')&\leq 2s\,\mathrm{card}\left (J(A,A')\right )+\dfrac{s}{32}\mathrm{card}\left (J^{C}(A,A')\right )\\
  &<2s\,\dfrac{n_1}{64}+\dfrac{n_1s}{32}=\dfrac{n_1s}{16}
  \end{split}
  \end{equation*}
  which contradicts the premise of the lemma.
% % % % % % % % % % % % % % % % % % % % % % % % % % % % % % % % % % % % % % % % % % % % %
% % % % % % % % % % % % % % % % % % % % % % % % % % % % % % % % % %
% % % % % % % % % % % % % % % % % % % % % % % % % % % % % % % % % % % % % % % % % % % % %
% % % % % % % % % % % % % % % % % % % % % % % % % % % % % % % % % % % % % %
   \section{Proof of Theorem \ref{thm_lower_bound_q}.}  \label{proof_lower_bound_q}
It is enough to prove (i) since (ii) follows from (i) and the Markov inequality.

To prove (i) we use Theorem 2.5 in \cite{tsybakov_book}.  We define $k\geq 1$ be the largest integer satisfying
  \begin{equation}\label{lowes_1}
  \begin{split}
   k\leq s\,\sigma^{-q}\,\left (\log\left (1+\dfrac{n_2}{k}\right )\right )^{-q/2}.
  \end{split}
  \end{equation}
  If there is no $k\geq 1$ satisfying \eqref{lowes_1}, take $k=0$. Set $\bar k=k\vee 1$ and $S=\bar k\wedge \frac{n_2}{2}$.
  Let $\Omega'\subset \{0,1\}^{S}_{n_1\times n_2}$ be the set given by Lemma \ref{selection_lemma}. We consider
  \begin{equation*}
  \Omega=\left \{\tau\left (\frac{\bar\delta}{S}\right )^{1/q}A\,:\,A\in \Omega'\right \}
  \end{equation*}
  where $0<\tau<1$ and $0<\bar \delta\leq s$ will be chosen later. It is easy to see that $\Omega\subset\mathcal{A}(q,s)$.

  Since
    the noise variables $\xi_{ij}$ are i.i.d Gaussian $\mathcal{N}(0,\sigma^{2})$, for any two distinct $B,B'$ in $\Omega$, the Kullback-Leibler divergence $\mathrm{KL}(\mathbb P_{B},\mathbb P_{B'})$ between $\mathbb P_{B}$
    and $\mathbb P_{B'}$ is given by
    \begin{equation}
    \mathrm{KL}(\mathbb P_{B},\mathbb P_{B'})=\dfrac{\Vert B-B'\Vert^{2}_{2}}{2\,\sigma^{2}}
    \end{equation}

  We consider now three cases, depending on the value of the integer $k$ defined in \eqref{lowes_1}.

  \textit{Case (1)}: $k=0$. Since $k=0$,  the inequality \eqref{lowes_1} is violated for $k=1$, so that
  \begin{equation}\label{lowes_3}
  s\leq \sigma^{q}\,\left (\log\left (1+n_2\right )\right )^{q/2}.
  \end{equation}
  Here $S=1$ and we take $\bar \delta=s$.
     We have that
    for any two distinct $B,B'$ in $\Omega$,
        \begin{equation}\label{lowes_2}
        \begin{split}
         \| B-B'\|^{2}_{2} &\geq\dfrac{n_1\tau^{2}}{4.5}\,\left (s\right )^{2/q}.
        \end{split}
        \end{equation}

 On the other hand, by Lemma \ref{selection_lemma}, we have that
 \begin{equation*}
 \log \left \vert \Omega\right \vert\geq C\,n_1\log\left (1+n_2\right )
 \end{equation*}
 and using \eqref{lowes_3}
 \begin{equation}\label{lowes_6}
 \begin{split}
     \mathrm{KL}(\mathbb{P}_{B},\mathbb{P}_{B'})&=\dfrac{1}{2\,\sigma^{2}}\| B-B'\|^{2}_{2}\leq \dfrac{\tau^{2}\,n_1\,s^{2/q}}{\sigma^{2}}\\& \leq \tau^{2}\,n_1\,\log(1+n_2)\\&\leq \alpha \log\left \vert\Omega\right \vert
     \end{split}
     \end{equation}
              for some $0<\alpha<1/8$ if $0<\tau<1$ is chosen sufficiently small.

              \textit{Case (2)}: $1\leq k\leq n_2/2$. We take $\bar \delta=\left (\frac{s}{S}\right )^{1/q}$. For any two distinct $B,B'$ in $\Omega$,
                      \begin{equation}\label{lowes_4}
                      \begin{split}
                  \| B-B'\|^{2}_{2} &\geq\dfrac{n_1\tau^{2}\,(S+1)}{9}\,\left (\frac{s}{S}\right )^{2/q}\\&
                       \geq \dfrac{n_1\tau^{2}}{9}\,(s)^{2/q}\left (s\,\sigma^{-q}\,\left (\log\left (1+\dfrac{n_2}{k}\right )\right )^{-q/2}\right )^{1-2/q}\\&
                                              \geq \dfrac{n_1\tau^{2}}{9}\,s\,\sigma^{2-q}\,\left (\log\left (1+\dfrac{n_2}{k}\right )\right )^{1-q/2}\\&
                                              \geq \dfrac{n_1\tau^{2}}{9}\,s\,\sigma^{2-q}\,\left (\log\left (1+n_2\,s^{-1}\,\sigma^{q}\right )\right )^{1-q/2}.
                      \end{split}
                      \end{equation}

               By Lemma \ref{selection_lemma}, we have that
                \begin{equation*}
                \begin{split}
                \log \left \vert \Omega\right \vert&\geq C\,n_1\,S\,\log\left (1+\dfrac{n_2}{S}\right )\\&\geq
                               \frac{C\,n_1}{2}\,s\,\sigma^{-q}\left (\log\left (1+n_2\,s^{-1}\,\sigma^{q}\right )\right )^{1-q/2}
                \end{split}
                \end{equation*}
               and
               \begin{equation}\label{lowes_5}
               \begin{split}
                   \mathrm{KL}(\mathbb{P}_{B},\mathbb{P}_{B'})&=\dfrac{1}{2\,\sigma^{2}}\| B-B'\|^{2}_{2}\leq \dfrac{\tau^{2}\,n_1}{\sigma^{2}}s^{2/q}\,S^{1-2/q}\\& \leq \dfrac{\tau^{2}\,n_1}{\sigma^{2}}s^{2/q}\,\left (s\,\sigma^{-q}\,\left (\log\left (1+n_2\,s^{-1}\,\sigma^{q}\right )\right )^{-q/2}\right )^{1-2/q}
                   \\& \leq \tau^{2}\,n_1\,\sigma^{-q}\,\left (\log\left (1+n_2\,s^{-1}\,\sigma^{q}\right )\right )^{1-q/2}
                   \\&\leq \alpha \log\left \vert\Omega\right \vert
                   \end{split}
                   \end{equation}
                            for some $0<\alpha<1/8$ if $0<\tau<1$ is chosen sufficiently small.

 \textit{Case (3)}: $ k> n_2/2$.  Since $k> n_2/2$,  the inequality \eqref{lowes_1} is violated for $k= n_2/2$, so that
   \begin{equation}\label{lowes_7}
    s\geq \frac{n_2\,\sigma^{q}}{2}.
   \end{equation}
   In this case $S=n_2/2$ and, using \eqref{lowes_7}, we can take $\bar \delta=\frac{n_2\,\sigma^{q}}{2}$.
      We have that
     for any two distinct $B,B'$ in $\Omega$,
         \begin{equation}\label{lowes_8}
         \begin{split}
          \| B-B'\|^{2}_{2} &\geq \frac{\tau^{2}n_1\,n_2\,\sigma^{2}}{18}.
         \end{split}
         \end{equation}

  On the other hand, by Lemma \ref{selection_lemma}, we have that
  \begin{equation*}
  \log \left \vert \Omega\right \vert\geq C\,n_1\,n_2
  \end{equation*}
  and
  \begin{equation}\label{lowes_9}
  \begin{split}
      \mathrm{KL}(\mathbb{P}_{B},\mathbb{P}_{B'})&=\dfrac{1}{2\,\sigma^{2}}\| B-B'\|^{2}_{2}\leq \dfrac{\tau^{2}\,n_1\,n_2}{2}\\&\leq \alpha \log\left \vert\Omega\right \vert
      \end{split}
      \end{equation}
               for some $0<\alpha<1/8$ if $0<\tau<1$ is chosen sufficiently small.

              Now the statement of the Theorem  \ref{thm_lower_bound_q} follows from \eqref{lowes_2} - \eqref{lowes_6}, \eqref{lowes_4} - \eqref{lowes_5},  \eqref{lowes_8} -  \eqref{lowes_9} and the Theorem 2.5 in \cite{tsybakov_book}.
  % % % % % % % % % % % % % % % % % % % % % % % % % % % % % % % % % % % % % % % % % % % % % %
 \section{Proof of Theorem \ref{thm_upper_bounds}.}  \label{proof_thm_upper_bounds}
  This proof essentially follows the scheme suggested in \cite{bunea_tsybakov_0} by adding an extension to the case of sub-Gaussian noise.
  Let $A\in \mathbb R ^{n_1\times n_2}$ be a fixed, but arbitrary matrix. Define for all $1\leq r\leq n_1n_2$
  \begin{equation*}
  \mathcal{B}_{r}=\left \{\bar A=A'-A\in \mathbb R ^{n_1\times n_2}\;:\; \Vert A' \Vert_{0}=r \right\}.
  \end{equation*}
  Let $\{J_k\}$, $k=1,\dots,\binom{n_1n_2}{r}$ be all the sets of matrix indices $(i,j)$ of cardinality $r$. Define
  \begin{equation*}
   \mathcal{B}_{r,k}=\left \{\bar A=(\bar a_{ij})\in \mathcal{B}_{r}\;:\;  a'_{ij}\not =0\;\iff\;(i,j)\in J_k \right\}
   \end{equation*}
   where $a'_{ij}=\bar a_{ij}+a_{ij}$. We have that $\mathrm{dim}(\mathcal{B}_{r,k})\leq r$. Let $\Pi_{r,k}(B)$ denote the projection of the matrix $B$ onto $\mathcal{B}_{r,k}$ and $\mathrm{pen}(A)=\lambda\Vert A\Vert_0\log\left (\dfrac{e\,n_1\,n_2}{\vert A\vert_0\vee 1}\right )$. By the definition of $\hat M$, for any $A\in \mathbb R ^{n_1\times n_2}$,
   \begin{equation*}
   \Vert Y-\hat M\Vert^{2}_{2}+\mathrm{pen}(\hat M)\leq \Vert Y-A\Vert^{2}_{2}+\mathrm{pen}(A).
   \end{equation*}
   Rewriting this inequality yields
   \begin{equation*}
   \begin{split}
    \Vert M-\hat M\Vert^{2}_{2}+\mathrm{pen}(\hat M)&\leq \Vert M-A\Vert^{2}_{2}+2\underset{(i,j)}{\Sigma}\xi_{ij}(\hat M-A)_{ij}+\mathrm{pen}(A)\\
    &\leq \Vert M-A\Vert^{2}_{2}+2\left (\underset{(i,j)}{\sum}\xi_{ij}\frac{(\hat M-A)_{ij}}{\Vert\hat M-A\Vert_{2}}\right )\Vert \hat M-A\Vert_{2}+\mathrm{pen}(A).
   \end{split}
     \end{equation*}
     For $B=(b_{ij})\in \mathbb R ^{n_1\times n_2} $ we set $V(B)=\underset{(i,j)}{\sum}\frac{\xi_{ij}\,b_{ij}}{\Vert B\Vert_{2}}$, then for any $a>1$
     \begin{equation} \label{hard_3}
       \begin{split}
        \left (1-\frac{1}{a}\right )\Vert M-\hat M\Vert^{2}_{2}+\mathrm{pen}(\hat M)&\leq \left (1+\frac{1}{a}\right )\Vert M-A\Vert^{2}_{2}+2aV^{2}(\hat M-A)+\mathrm{pen}(A).
       \end{split}
         \end{equation}
   Next, since $\mathbb R ^{n_1\times n_2}=\underset{r=0}{\overset{n_1n_2}{\bigcup}}\underset{k=1}{\overset{\binom{n_1n_2}{r}}{\bigcup}}\mathcal{B}_{r,k}$, we obtain
   \begin{equation*}
   2aV^{2}(\hat M-A)-\mathrm{pen}(\hat M)\leq \underset{0\leq r\leq n_1n_2}{\max}\;\underset{0\leq k\leq \binom{n_1n_2}{r}}{\max}\;\underset{\bar A\in \mathcal{B}_{r,k}}{\max}\left \{2aV^{2}(\bar A)-\mathrm{pen}(\bar A+A)\right \}.
   \end{equation*}
   Note that for $r=0$ we have that  $\mathcal{B}_{0}(A)=\{-A\}$ and $$2aV^{2}(-A)-\mathrm{pen}(-A+A)=2aV^{2}(A).$$
   Let $J_{\bar A}$ denotes the sparsity pattern of $\bar A=(\bar a_{ij})$, i.e.
   $$J_{\bar A}=\left \{(i,j)\in \mathbb N_{n_1\times n_2}\;:\;\bar a_{ij}\not = 0\right \},$$
   then for any $\bar A\in \mathcal{B}_{r,k}$
   $$V^{2}(\bar A)=\left (\underset{(i,j)\in J_{\bar A}}{\sum}\frac{\xi_{ij}\,\bar a_{ij}}{\Vert\bar A\Vert_{2}}\right )^{2}\leq \Vert \Pi_{r,k}(E)\Vert^{2}_{2}.$$
   This together with \eqref{hard_3} imply
    \begin{equation} \label{hard_1}
         \begin{split}
          \Vert M-\hat M\Vert^{2}_{2}&\leq \frac{a+1}{a-1}\Vert M-A\Vert^{2}_{2}+\frac{a}{a-1}\mathrm{pen}(A)+\frac{2a^{2}}{a-1}V^{2}(A)\\&+\frac{a}{a-1}\left [\underset{1\leq r\leq n_1n_2}{\max}\;\underset{0\leq k\leq \binom{n_1n_2}{r}}{\max}\left \{2a \Vert \Pi_{r,k}(E)\Vert^{2}_{2}-\lambda r\log\left (\dfrac{e\,n_1\,n_2}{r}\right )\right \}\right ].
         \end{split}
           \end{equation}
   By Assumption \ref{ass_noise}, the errors $\xi_{ij}$ are sub-gaussian. We will use the following tail bounds in order to control the last term in \eqref{hard_1}.
   \begin{lemma}\label{tail_bound}
   Let Assumption \ref{ass_noise} be satisfied. Then, there exists absolute constants $c_0, c_1, c_2, c_3>0$ such that for $K_1=K_0\,K^{2}$ with $K_0>0$ large enough

    \begin{equation}\label{tail_bound_proba}
    \mathbb P\left [\underset{1\leq r\leq n_1n_2}{\max}\;\underset{0\leq k\leq \binom{n_1n_2}{r}}{\max}\left \{\Vert \Pi_{r,k}(E)\Vert^{2}_{2}-K_1 r\log\left (\dfrac{e\,n_1\,n_2}{r}\right )\right \}\geq \Delta\right ]\leq c_1\exp\left \{-\frac{c_2\,\Delta^{2}}{K^{2}}\right \},  \end{equation}

    \begin{equation}\label{tail_bound_expectation}
       \hskip -1cm \mathbb E\left [\underset{1\leq r\leq n_1n_2}{\max}\;\underset{0\leq k\leq \binom{n_1n_2}{r}}{\max}\left \{\Vert \Pi_{r,k}(E)\Vert^{2}_{2}-K_1 r\log\left (\dfrac{e\,n_1\,n_2}{r}\right )\right \}\right ]\leq c_0\,K^{2} \end{equation}
  and
     \begin{equation}\label{tail_bound_V}
         \mathbb P\left [V^{2}(A)-K_1 \Vert A\Vert_{0}\geq \Delta\right ]\leq 2\exp\left \{-\frac{c_3\,\Delta^{2}}{K^{2}}\right \}  \end{equation}
  \end{lemma}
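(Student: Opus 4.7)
The plan is to reduce all three tail bounds to sub-Gaussian concentration of Lipschitz functionals of the noise matrix $E$. Under Assumption~\ref{ass_noise} each $\xi_{ij}$ is centred and sub-Gaussian with parameter $K$, so any $1$-Lipschitz function of $(\xi_{ij})$ concentrates sub-Gaussianly with variance proxy of order $K^{2}$ (via a bounded-differences/entropy argument, or directly from Talagrand-type inequalities in the sub-Gaussian setting). This is the engine behind all three bounds in the lemma, and it is the only place where Assumption~\ref{ass_noise} enters.

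I begin with the cheapest inequality, \eqref{tail_bound_V}. The quantity $V(A)=\sum_{i,j}\xi_{ij}(a_{ij}/\Vert A\Vert_{2})$ is a linear combination of the $\xi_{ij}$ with unit $\ell_{2}$-weights, so $V(A)$ itself is sub-Gaussian with parameter $CK$ and therefore $\mathbb P(|V(A)|\ge u)\le 2\exp(-cu^{2}/K^{2})$. The event $\{V^{2}(A)-K_{1}\Vert A\Vert_{0}\ge \Delta\}$ is contained in $\{|V(A)|\ge u\}$ for $u=\sqrt{K_{1}\Vert A\Vert_{0}+\Delta}$; choosing $K_{1}=K_{0}K^{2}$ with $K_{0}$ large absorbs the $K_{1}\Vert A\Vert_{0}$ part of the exponent, and the remaining $\sqrt\Delta$ slack yields the sub-Gaussian-in-$\Delta$ tail stated in \eqref{tail_bound_V}.

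For \eqref{tail_bound_proba} I argue in two layers. Fix $r$ and $k$: the map $B\mapsto \Vert \Pi_{r,k}(B)\Vert_{2}$ is $1$-Lipschitz in the Frobenius norm of the noise coordinates supported on $J_{k}$, so sub-Gaussian Lipschitz concentration gives $\mathbb P(\Vert \Pi_{r,k}(E)\Vert_{2}\ge \mathbb E\Vert \Pi_{r,k}(E)\Vert_{2}+u)\le \exp(-cu^{2}/K^{2})$, with $\mathbb E\Vert \Pi_{r,k}(E)\Vert_{2}\le \sqrt{\mathbb E\Vert \Pi_{r,k}(E)\Vert_{2}^{2}}\le CK\sqrt{r}$. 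A union bound over the $\binom{n_{1}n_{2}}{r}\le (en_{1}n_{2}/r)^{r}$ sparsity patterns of size $r$, and then over $r\in\{1,\dots,n_{1}n_{2}\}$, effectively replaces $cu^{2}/K^{2}$ by $cu^{2}/K^{2}-Cr\log(en_{1}n_{2}/r)$; this is exactly why the penalty $K_{1}r\log(en_{1}n_{2}/r)$ with $K_{1}=K_{0}K^{2}$ and $K_{0}$ large enough dominates both the centring $CK\sqrt{r}$ and the log-cardinality of the model class, after which tuning $u$ to leave a $c'\sqrt\Delta$ slack produces \eqref{tail_bound_proba}; \eqref{tail_bound_expectation} follows by integrating the resulting tail.

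The main obstacle is the passage from the norm to the squared norm at the tail level: Lipschitz concentration most naturally controls $\Vert\Pi_{r,k}(E)\Vert_{2}$ itself, whereas the lemma asks for a tail on $\Vert\Pi_{r,k}(E)\Vert_{2}^{2}$ of the sub-Gaussian-in-$\Delta$ shape $\exp(-c\Delta^{2}/K^{2})$. Matching the two requires the elementary but delicate comparison $\sqrt{K_{1}r\log(en_{1}n_{2}/r)+\Delta}\ge \sqrt{K_{1}r\log(en_{1}n_{2}/r)}+c'\sqrt\Delta$ (valid once $K_{0}$ is large so that the first term under the root is not too small), so that the Lipschitz bound is applied at level $c'\sqrt\Delta$ and the squared radius $(c'\sqrt\Delta)^{2}=c'^{2}\Delta$ lands in the numerator of the exponent with the right $\Delta^{2}/K^{2}$ shape. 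The remaining ingredients --- the combinatorial union bound and the absorption of lower-order terms by enlarging $K_{0}$ --- are standard in model-selection arguments and parallel the Gaussian scheme of \cite{bunea_tsybakov_0}; the only novelty here is the replacement of Gaussian concentration by its sub-Gaussian counterpart, which Assumption~\ref{ass_noise} makes available.
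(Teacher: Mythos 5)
Your union-bound architecture (over the $\binom{n_1n_2}{r}$ supports and then over $r$, absorbing $r\log(en_1n_2/r)$ into the penalty by taking $K_0$ large) is exactly the paper's, and your treatment of \eqref{tail_bound_V} via the sub-Gaussianity of the fixed unit-norm linear combination $V(A)$ is valid (indeed slightly cleaner than the paper's, which bounds $V^2(A)\le\sum_{(i,j)\in J_A}\xi_{ij}^2$). But the ``engine'' you put at the centre of the argument for \eqref{tail_bound_proba} is not a theorem: dimension-free concentration of arbitrary ($1$-)Lipschitz functionals with variance proxy $O(K^2)$ does \emph{not} follow from Assumption \ref{ass_noise} alone. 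Bounded differences requires bounded variables; Talagrand's convex-distance inequality likewise requires bounded support; and there are unbounded sub-Gaussian product measures for which generic Lipschitz concentration fails. Since you explicitly say this is ``the only place where Assumption \ref{ass_noise} enters,'' the foundation of your proof of \eqref{tail_bound_proba}--\eqref{tail_bound_expectation} is missing. The repair is what the paper actually does: observe that $\Vert\Pi_{r,k}(E)\Vert_2^2=\sum_{(i,j)\in J_k}\xi_{ij}^2$ exactly (the projection just restricts coordinates), that each $\xi_{ij}^2$ is sub-exponential with $\Vert\xi_{ij}^2\Vert_{\psi_1}\le 2K^2$, and apply a Bernstein-type inequality for sums of sub-exponential variables (Proposition 5.16 in \cite{vershynin}). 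The norm-concentration statement you want to invoke for $\Vert\Pi_{r,k}(E)\Vert_2$ is itself proved by this sub-exponential route, not by Lipschitz concentration, so working directly with the squared norm also removes your ``norm versus squared norm'' obstacle entirely.

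A second, smaller point: your exponents do not match what you claim they are. Applying a deviation bound at level $u=c'\sqrt{\Delta}$ for the norm gives an exponent proportional to $u^2/K^2=c'^2\Delta/K^2$, i.e.\ a tail $\exp\{-c\Delta/K^2\}$, which is sub-\emph{exponential} in $\Delta$, not the ``sub-Gaussian-in-$\Delta$'' shape $\exp\{-c\Delta^2/K^2\}$ you announce. In fact $\exp\{-c\Delta/K^2\}$ is the correct and intended bound: the paper's own proof of the lemma produces $\exp\{-C_2\Delta/K^2\}$, and Theorem \ref{thm_upper_bounds} consumes the lemma in exactly that form (the $\Delta^2$ appearing in the displayed statement of the lemma is evidently a typo). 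So your calculation lands in the right place, but you should not describe --- or try to upgrade --- the tail as quadratic in $\Delta$; a $\Delta^2/K^2$ tail for a sum of squared sub-Gaussians is false for large $\Delta$.
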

  Now \eqref{upper_expactation} follows from Lemma \ref{tail_bound} and \eqref{hard_1}.

  To prove \eqref{upper_proba}, note that by Lemma \ref{tail_bound} and \eqref{hard_1}, for $\lambda=2a\,K_0\,K^{2}$ there exist numerical constants $C,C_1,C_2>0$ such that
  \begin{equation*}
  \begin{split}
 & \mathbb P\left (\Vert M-\hat M\Vert^{2}_{2}\geq \underset{A\in \mathbb R^{n_1\times n_2}}{\inf}\left \{ \frac{a+1}{a-1}\Vert M-A\Vert^{2}_{2}+C\,\Vert A\Vert_{0}\,\log\left (\dfrac{e\,n_1\,n_2}{\Vert A\Vert_{0}}\right) \right \}+\dfrac{2a^{2}}{a-1}\Delta\right )\\&\leq
 \mathbb P\left (\left [\underset{1\leq r\leq n_1n_2}{\max}\;\underset{0\leq k\leq \binom{n_1n_2}{r}}{\max}\left \{ \Vert \Pi_{r,k}(E)\Vert^{2}_{2}-K_1 r\log\left (\dfrac{e\,n_1\,n_2}{r}\right )\right \}\right ]\geq \Delta/2\right )\\&\hskip 3 cm+\mathbb P\left (V^{2}(A)-K_1\Vert A\Vert_{0}\geq \Delta/2\right )\\& \leq C_1\exp\left \{-C_2\frac{\Delta}{K^{2}}\right \}
  \end{split}
  \end{equation*}
   which proves \eqref{upper_proba}.

   % % % % % % % % % % % % % % % % % % % % % % % % % % %
   % % % % % % % % % % % % % % % % % % % % % % % % % % % %
   % % % % % % % % % % % % % % % % % % % % % % % % % % % % % % %
   \section{Proof of Lemma \ref{tail_bound}}
    We  have that

    \begin{equation*}
   \begin{split}
    p_{\Delta}\overset{\mathrm{def}}{=}&\mathbb P\left [\underset{1\leq r\leq n_1n_2}{\max}\;\underset{0\leq k\leq \binom{n_1n_2}{r}}{\max}\left \{ \Vert \Pi_{r,k}(E)\Vert^{2}_{2}-K_1 r\log\left (\dfrac{e\,n_1\,n_2}{r}\right )\right \}\geq \Delta\right ]
    \\&\leq
    \sum_{r=1}^{n_1n_2}\sum_{k=1}^{\binom{n_1n_2}{r}}\mathbb P\left [\Vert \Pi_{r,k}(E)\Vert^{2}_{2} \geq \Delta+K_1 r\log\left (\dfrac{e\,n_1\,n_2}{r}\right )\right ]\\
    &\leq
     \sum_{r=1}^{n_1n_2}\binom{n_1n_2}{r}\mathbb P\left [ \mathbb Z_{r} \geq \Delta+K_1 r\log\left (\dfrac{e\,n_1\,n_2}{r}\right )-2rK^{2}\right ]
   \end{split}
   \end{equation*}
   where  $\mathbb Z_{r}=\sum^{r}_{i=1}\xi^{2}_i-\mathbb E(\xi^{2}_i)$ and $\xi_1,\dots,\xi_r$ are i.i.d. random variables satisfying Assumption \ref{ass_noise}. Note that $\xi^{2}_i$ are sub-exponential random variables with $\Vert\xi^{2}_i\Vert_{\psi_{1}}\leq 2\,K^{2}$. Applying Bernstein-type inequality (see, e.g., Proposition 5.16 in \cite {vershynin}) and using that $\binom{n_1n_2}{r}\leq \left (\dfrac{e\,n_1\,n_2}{r}\right )^{r}$ we get
    \begin{equation*}
   \begin{split}
    p_{\Delta}&\leq
     2\sum_{r=1}^{n_1n_2}\binom{n_1n_2}{r}\exp\left \{-C_2\left (K_0\,r\,\log\left (\dfrac{e\,n_1\,n_2}{r}\right )+\frac{\Delta}{2\,K^{2}}\right )\right \}\\&=
     2\exp\left \{-\frac{C_2\,\Delta}{K^{2}}\right \}\sum_{r=1}^{n_1n_2}\left (\dfrac{e\,n_1\,n_2}{r}\right )^{r}\exp\left \{-C_2\,K_0\,r\,\log\left (\dfrac{e\,n_1\,n_2}{r}\right )\right \}.
   \end{split}
   \end{equation*}
   Taking $K_0$ large enough we get
    \begin{equation*}
   \begin{split}
    p_{\Delta}&\leq
     2\exp\left \{-\frac{C_2\,\Delta}{K^{2}}\right \}\sum_{r=1}^{\infty}\exp\left \{-r\log 2\right \}\leq C_1\exp\left \{-\frac{C_2\,\Delta}{K^{2}}\right \}.
   \end{split}
   \end{equation*}
    This proves \eqref{tail_bound_proba} and  easily implies the bound on expectation value \eqref{tail_bound_expectation}.

    To proof \eqref{tail_bound_V}, we apply Bernstein-type inequality
     to $V^{2}(A)=\sum_{(i,j)\in J_A}(\xi_{ij})^{2}$:
   \begin{equation*}
   \begin{split}& \mathbb P\left [\sum_{(i,j)\in J_A}\xi_{ij}^{2}-\mathbb E\left (\xi_{ij}^{2}\right )\geq K_1\Vert A\Vert_{0}-2\Vert A\Vert_{0}\,K^{2}+\Delta\right ]\\
   &\hskip 1 cm\leq \exp\left \{-C_2\left (K_0\,\Vert A\Vert_{0}-\Vert A\Vert_{0}+\frac{\Delta}{2\,K^{2}}\right )\right \}\leq
   2\exp\left \{-\frac{C_2\,\Delta}{K^{2}}\right \}.
   \end{split}
   \end{equation*}
   % % % % % % % % % % % % % % % % % % % % % % % %
    % % % % % % % % % % % % % % % % % % % % % % % % % % % % % % % % % % % % % % % % % % % % % % % % % % % % % % % % % % % % % % % % % % % % % % % % % % % % % %

      \section{Proof of Corollary \ref{corollary_upper_bounds_q}.}  \label{proof_corollary_upper_bounds_q}
    We use Theorem \ref{thm_upper_bounds}. First, taking $A=0$  in \eqref{thm_upper_3a},  we get
    \begin{equation}\label{corollary_q_1}
    \begin{split}
    \Vert M-\hat M\Vert^{2}_{2}&\leq  \frac{a+1}{a-1}\Vert M\Vert^{2}_{2} +\dfrac{2a^{2}}{a-1}\Delta\\&\leq \frac{a+1}{a-1}n_1\,s^{2/q} +\dfrac{2a^{2}}{a-1}\Delta
    \end{split}
       \end{equation}
     with probability at least $1-2\exp\left \{-\frac{C_2\,\Delta}{K^{2}}\right \}$.

     Now, choosing $A=M$, we obtain that
      \begin{equation}\label{corollary_q_2}
      \begin{split}
      \Vert M-\hat M\Vert^{2}_{2}&\leq  C\,K^{2}\,\Vert M\Vert_{0}\,\log\left (\dfrac{e\,n_1\,n_2}{\Vert M\Vert_{0}\vee 1}\right) +\dfrac{2\,a^{2}}{a-1}\Delta\\&\leq  C\,K^{2}\,n_1\,n_2 +\dfrac{2\,a^{2}}{a-1}\Delta
      \end{split}
     \end{equation}
     with probability at least $1-2\exp\left \{-\frac{C_2\,\Delta}{K^{2}}\right \}$.

     Finally,  Theorem \ref{thm_upper_bounds} implies that for any $1\leq s'\leq n_2/2$, all $a>1$ and any $\Delta>0$
      \begin{equation}\label{corollary2_1}
       \Vert M-\hat M\Vert^{2}_{2}\leq \underset{A\in \mathcal{A}(2s')}{\inf} \frac{a+1}{a-1}\Vert M-A\Vert^{2}_{2}+C\,K^{2}\,n_1\,s'\,\log\left (1+\dfrac{n_2}{2\,s'}\right) +\dfrac{2a^{2}}{a-1}\Delta
      \end{equation}
      with probability at least $1-2\exp\left \{-\frac{C_2\,\Delta}{K^{2}}\right \}$.
     Now we use the following lemma. %proven in \cite{Tsybakov_St_Flour}:
       \begin{lemma}\label{lemma_approx_l_q}
       Let $1\leq s'\leq n_2/2$ and $0<q\leq 2$. For any $M\in \mathcal{A}(q,s)$, there exists $A\in \mathcal{A}(2s')$ such that
       \begin{equation}\label{approx_l_q}
       \Vert M-A\Vert^{2}_2\leq s^{2/q}\,(s')^{1-2/q}n_1.
       \end{equation}
%       where
%       \begin{equation}
%       C(q) = \left\{
%           \begin{array}{ll}
%                        \left (\dfrac{q}{1-q}\right )^{2},
%          & \hskip 0.5 cm\mbox{if} \quad  0<q<1\\  \\
%          \dfrac{q}{2-q}\;,
%             &\hskip 0.5 cm \mbox{if} \quad 1\leq q\leq 2.
%               \end{array} \right.
%       \end{equation}
       \end{lemma}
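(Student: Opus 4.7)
\textbf{Proof plan for Lemma \ref{lemma_approx_l_q}.} The approach is entirely row-wise: for each $i$, let $A_{i\cdot}$ be the hard-thresholded version of $M_{i\cdot}$ obtained by keeping only the $s'$ largest coordinates in absolute value and setting the rest to zero. Then $A_{i\cdot}$ has at most $s'\le 2s'$ nonzero entries, so $A\in\mathcal A(2s')$. Since $\|M-A\|_2^2=\sum_{i=1}^{n_1}\|M_{i\cdot}-A_{i\cdot}\|_2^2$, it suffices to establish, for every vector $v\in\mathbb B_q(s)\subset\mathbb R^{n_2}$, the single-row inequality
\begin{equation*}
\|v-a\|_2^2\;\le\; s^{2/q}(s')^{1-2/q},
\end{equation*}
where $a$ denotes the best $s'$-term approximation of $v$; summing over $i$ then yields the claim.

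To prove the single-row bound, denote by $v_{(1)}\ge v_{(2)}\ge\cdots\ge 0$ the nonincreasing rearrangement of $(|v_j|)_{j=1}^{n_2}$, so that $\|v-a\|_2^2=\sum_{j>s'}v_{(j)}^2$. The key monotonicity fact is that $\sum_{j\le s'+1}v_{(j)}^q\le \sum_j v_{(j)}^q\le s$, which combined with $v_{(s'+1)}\le v_{(j)}$ for $j\le s'+1$ gives $(s'+1)v_{(s'+1)}^q\le s$, i.e.\
\begin{equation*}
v_{(s'+1)}\;\le\;\Bigl(\tfrac{s}{s'+1}\Bigr)^{1/q}.
\end{equation*}
Since $0<q\le 2$, we have $2-q\ge 0$, and for $j>s'$ the inequality $v_{(j)}\le v_{(s'+1)}$ implies $v_{(j)}^{2-q}\le v_{(s'+1)}^{2-q}$, so $v_{(j)}^2=v_{(j)}^q\,v_{(j)}^{2-q}\le v_{(s'+1)}^{2-q}\,v_{(j)}^q$. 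Summing over $j>s'$ and using $\sum_{j>s'}v_{(j)}^q\le s$ gives
\begin{equation*}
\sum_{j>s'}v_{(j)}^2\;\le\; v_{(s'+1)}^{2-q}\cdot s\;\le\; s\cdot\Bigl(\tfrac{s}{s'+1}\Bigr)^{(2-q)/q}\;=\;s^{2/q}(s'+1)^{1-2/q}\;\le\;s^{2/q}(s')^{1-2/q},
\end{equation*}
where the last inequality uses $1-2/q\le 0$. Summing over rows gives $\|M-A\|_2^2\le n_1 s^{2/q}(s')^{1-2/q}$, as required.

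There is no real obstacle here: this is the classical tail bound for the best $s'$-term approximation of vectors in strong $\ell_q$-balls, and the matrix claim is just its row-wise sum. The only points to watch are (i) the boundary case $q=2$, where $2/q-1=0$ and the bound degenerates to the trivial estimate $\|v\|_2^2\le s$ (still correct, and consistent with taking $a=0$), and (ii) the harmless slack $s'\subseteq 2s'$, which is presumably inserted because the parameter $s'$ appearing in the upstream invocation within the proof of Corollary \ref{corollary_upper_bounds_q} enters through $\mathcal A(2s')$ after accounting for indices where the approximant could be supported on up to $2s'$ positions.
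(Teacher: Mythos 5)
Your argument is correct: it is the classical bound on the best $s'$-term approximation error of a vector in a strong $\ell_q$-ball ($v_{(s'+1)}^q\le s/(s'+1)$, then $v_{(j)}^2\le v_{(s'+1)}^{2-q}v_{(j)}^q$ summed over the tail), applied row by row, and the resulting $A\in\mathcal A(s')\subset\mathcal A(2s')$ indeed satisfies \eqref{approx_l_q}. The paper does not prove the lemma itself but delegates it to Lemmas 7.2 and 7.4 of the cited reference \cite{Tsybakov_Seoul}, where essentially this same argument appears, so your self-contained proof matches the intended route.
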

 For the proof of this lemma, see Lemma 7.2 in \cite{Tsybakov_Seoul}  (case $0<q\le 1$) and the proof of Lemma 7.4 in \cite{Tsybakov_Seoul}  (case $1<q\le 2$).

      Now, \eqref{corollary2_1} and Lemma \ref{lemma_approx_l_q} imply that for any $1\leq s'\leq n_2/2$
      \begin{equation} \label{corollary2_2}
          \Vert M-\hat M\Vert^{2}_{2}\leq
          C\left(K^{2}\,n_1\,s'\,\log\left (1+\dfrac{n_2}{s'}\right)+s^{2/q}\,(s')^{1-2/q}n_1 +\Delta\right).
         \end{equation}
         The terms depending on $s'$ on the right side of \eqref{corollary2_2} are balanced by choosing
         \begin{equation*}
         s'=\left \lfloor c'\dfrac{s}{K^{q}}\left (\log\left (1+n_2\,K^{q}s^{-1}\right )\right)^{-q/2} \right \rfloor
         \end{equation*}
         with suitable constant $c'>0$. With this choice of $s$ we get
         \begin{equation}\label{corrolary_q_3}
                \Vert M-\hat M\Vert^{2}_{2}\leq C\left(n_1\,s\,K^{2-q}\left  (\log\left (1+n_2\dfrac{K^{q}}{s}\right )\right)^{1-q/2} +\Delta\right).
               \end{equation}
     The inequalities \eqref{corollary_q_1}, \eqref{corollary_q_2} and \eqref{corrolary_q_3} imply the statement of the Corollary \ref{corollary_upper_bounds_q}.
\section*{Acknowledgments}

 The authors want to thank L.A. Bassalygo for suggesting a shorter proof of Lemma \ref{selection_lemma}. This work was supported  by the French National Research Agency (ANR) under the grants
ANR-13-BSH1-0004-02, ANR - 11-LABEX-0047, and by GENES.
It was also supported by the "Chaire Economie et Gestion des Nouvelles Donn\'ees", under the auspices of Institut Louis Bachelier, Havas-Media and Paris-Dauphine.

\end{document}